\newtheorem{theorem}{Theorem}
\newtheorem{lemma}[theorem]{Lemma}
\newtheorem{conjecture}[theorem]{Conjecture}
\newcommand{\bigO}[1]{\mathcal O\left(#1\right)}
\title[Seeking a quadratic refinement]%
 {Seeking a quadratic refinement\\of Sendov's conjecture}
\author{Michael J. Miller}
\address{Department of Mathematics, Le Moyne College,
Syracuse, New York 13214, USA}
\email{millermj@lemoyne.edu}
\dedicatory{This paper is dedicated to Sandy Segal, who introduced
me to Sendov's conjecture.}
\date{5-Sep-2025}
\thanks{Thanks to Le Moyne College for providing the author with substantial
research computing resources.}
\subjclass[2020]{Primary 30C15}
\keywords{Sendov, critical points, polynomial}
\begin{document}

\begin{abstract}
A conjecture of Sendov states that if a polynomial has all its roots in the
unit disk and if $\beta$ is one of those roots, then within one unit of $\beta$
lies a root of the polynomial's derivative.  If we define $r(\beta)$ to be the
greatest possible distance between $\beta$ and the closest root of the
derivative, then Sendov's conjecture claims that $r(\beta) \le 1$.

In this paper, we conjecture that there is a constant $c>0$ so that $r(\beta)
\le 1-c\beta(1-\beta)$ for all $\beta \in [0,1]$.  We find such constants for
complex polynomials of degree $2$ and $3$, for real polynomials of degree $4$,
for all polynomials whose roots lie on a line, for all polynomials with
exactly one distinct critical point, and when $\beta$ is sufficiently close 
to $1$.  In addition, we show that experimental data suggests that 
$c\approx0.233$.
\end{abstract}

\maketitle

\section{Introduction}\label{section_1} 

In 1958, Sendov conjectured that if a polynomial (with complex
coefficients) has all its roots in the unit disk, then within one unit
of each of its roots lies a root of its derivative.  Papers by
Sendov \cite{Sendov} and Schmeisser~\cite{Schmeisser} and books by
Sheil-Small \cite[Chapter 6]{Sheil-Small} and Rahman and Schmeisser
\cite[Section 7.3]{Rahman-Schmeisser} summarize the work that has been
done on this conjecture, identifying more than 80 related papers that
have been published in the past 67 years.  Despite this substantial
body of work, Sendov's conjecture has been verified only for special
cases.

Progress on Sendov's conjecture has slowed in the past few decades, as it
appears that some fundamentally new ideas may be needed.  In this paper, we
begin work on a refinement of Sendov's conjecture, with the hope of providing
new pathways toward achieving a solution.

Let $\beta$ be a complex number of modulus at most~$1$.  Define $S(\beta)$ to
be the set of polynomials of degree at least $2$ with complex coefficients, all
roots in the closed unit disk and at least one root at $\beta$.  For a
polynomial~$P \in S(\beta)$, define $d(P, \beta)$ to be the distance between
$\beta$ and the closest root of the derivative~$P'$.  Note that by the
Gauss-Lucas Theorem \cite[Theorem 2.1.1]{Rahman-Schmeisser} all roots of $P'$
are also in the closed unit disk, and so each $d(P, \beta)\le 2$. 

Finally, define $r(\beta)=\sup \{ d(P, \beta): P \in S(\beta) \}$ and note that
$r(\beta) \le 2$.  In this notation, Sendov's conjecture claims simply that
$r(\beta) \le 1$.  To date, the best such bound known to be true is an
improvement of the work of Bojanov, Rahman and Szynal that implies $r(\beta)\le
1.0753829$ \cite[Theorem~7.3.17]{Rahman-Schmeisser}.

In calculating $r(\beta)$, we will assume without loss of generality (by
rotation) that $0 \le \beta \le 1$.  Define $r_n(\beta)=\sup \{ d(P, \beta): P
\in S(\beta) \hbox{ and } \deg P =n \}$.  Bojanov, Rahman and Szynal have shown
\cite[Lemma~4 and $p(z)=z^n-z$]{Bojanov-Rahman-Szynal} that
$r_n(0)=(1/n)^{1/(n-1)}$, so letting $n$ tend to infinity gives $r(0) = 1$.  In
addition, Rubinstein has shown \cite[Theorem~1]{Rubinstein} that each
$r_n(1)=1$, so $r(1)=1$.  Given that $r(\beta)=1$ at both endpoints of the
interval $0 \le \beta \le 1$, the best possible linear (in $\beta$) bound on
$r(\beta)$ is that $r(\beta) \le 1$, which is the claim of Sendov's conjecture.

To preserve the bounds of $1$ at $\beta=0$ and $\beta=1$, any
quadratic bound on $r(\beta)$ must be of the form $r(\beta) \le
1-c\beta(1-\beta)$ for some constant $c$.  Note that Sendov's conjecture
asserts that we may take $c=0$.  We refine this with
\begin{conjecture}\label{conjecture_1}
   There is a constant $c>0$ so that $r(\beta) \le 1-c\beta(1-\beta)$ for all
   $\beta \in [0,1]$.
\end{conjecture}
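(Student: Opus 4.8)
Because Conjecture~\ref{conjecture_1} quantifies over all degrees simultaneously, the plan is to first fix the degree $n$, establish $r_n(\beta) \le 1 - c_n\,\beta(1-\beta)$ for some constant $c_n > 0$, and then argue that $\inf_n c_n > 0$.  For each fixed $n$ the starting point is a compactness-and-variation argument of the kind standard in work on Sendov's conjecture: after normalizing leading coefficients the set $\{P \in S(\beta) : \deg P = n\}$ is compact and $d(\cdot,\beta)$ is continuous, so $r_n(\beta)$ is attained; and for an extremal $P$ one shows, by pushing a root that lies strictly inside the disk toward the boundary and by perturbing the nearest critical point, that every root of $P$ other than $\beta$ lies on the unit circle and that $\beta$ is indeed the root closest to the relevant critical point.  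This reduces the problem to an optimization over the $n-1$ arguments of the remaining roots.

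In the simplest cases this reduction runs to completion.  For $n=2$ the other root is $e^{i\theta}$, the unique critical point is $\frac12(\beta+e^{i\theta})$, and maximizing its distance from $\beta$ gives $r_2(\beta)=\frac12(1+\beta)$; since $1-\frac12(1+\beta)=\frac12(1-\beta)\ge\frac12\beta(1-\beta)$, the value $c=\frac12$ works.  For a polynomial all of whose roots lie on a line, one rotates and reflects so that the roots are real, lie in $[-1,1]$, and $\beta\in[0,1]$ is one of them; the critical points are then real and interlace the roots, and writing the critical-point equation as $\frac{1}{\mu-\beta}=-\sum_{\gamma}\frac{1}{\mu-\gamma}$ over the other roots $\gamma$ and estimating the right side---separating the case that $\beta$ is an extreme root, where the sum has a fixed sign and each term is at least $\frac{1}{1+\beta}$, from the case that $\beta$ lies between two roots, where the nearest root on one side is within $1-\beta$ of $\beta$---yields $d(P,\beta)\le\frac12(1+\beta)$ with a constant uniform in $n$.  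For polynomials with a single distinct critical point $\zeta$ one has $P(z)=a(z-\zeta)^n+b$, so the roots form a regular $n$-gon of circumradius $\rho=|\beta-\zeta|$ centered at $\zeta$ with a vertex at $\beta$; the containment condition is an explicit system in $\zeta$, $\rho$ and the orientation, which one optimizes to obtain a closed form $\rho_n(\beta)$ (for instance $\rho_3(\beta)=\frac{\beta}{2}+\frac{1}{2\sqrt3}\sqrt{4-\beta^2}$, whose values at $\beta=0$ and $\beta=1$ recover $r_3(0)$ and $r_3(1)$), after which $\rho_n(\beta)\le 1-c\,\beta(1-\beta)$ is a one-variable calculus problem; note this family also imposes an upper bound on any admissible $c$ through the rate at which $\rho_n(\beta)\to1$ as $\beta\to1$.

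The cases of complex cubics and real quartics require genuine work.  After the variational reduction one is left with a one- or two-parameter family; the plan is to express the distance from $\beta$ to the nearest critical point (or a resultant locating it) as an explicit algebraic function of $\beta$ and the remaining arguments, and then to prove $d(P,\beta)\le 1-c\,\beta(1-\beta)$ by a mixture of calculus and, where the algebra becomes unwieldy, rigorous verification over a compact parameter box using interval arithmetic---which is where substantial computing resources enter.  The regime in which $\beta$ is near $1$ is instead handled by perturbation around the family that is extremal for $\beta=1$ in Rubinstein's theorem (for instance $z^n-1$, whose critical points coincide at the origin, at distance exactly $1$ from the root at $1$): one linearizes the correspondence between roots and critical points in $\varepsilon=1-\beta$ and shows that the nearest critical point recedes from $\beta$ at a rate bounded away from $0$ uniformly in $n$, giving $r_n(\beta)\le 1-c(1-\beta)+o(1-\beta)$ and hence the quadratic bound on a fixed left-neighborhood of $1$.

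I expect the principal obstacle to be exactly the passage from fixed degree to all degrees.  The configurations that are extremal for large $n$---high-multiplicity roots on the unit circle and critical points clustering near a single point---are precisely those that have made Sendov's conjecture itself resistant, and one cannot merely produce $c_n$ for each $n$ and take an infimum without first controlling the behavior of $c_n$ as $n\to\infty$.  The most promising route would be to conjecture the precise extremal polynomial for each pair $(n,\beta)$---the natural candidates being perturbations of $(z-\beta)(z-e^{i\theta})^{n-1}$ and of the $z^n$-type regular polygons above---compute $r(\beta)$ along it in closed form, and then prove extremality; failing that, the honest outcome of the program is Conjecture~\ref{conjecture_1} as stated, supported by the special cases above together with an explicit conjectural value of $c$ consistent with all of them.
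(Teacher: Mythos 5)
The statement you are asked to prove is labeled a \emph{conjecture} in the paper, and the paper does not prove it: it only verifies the inequality in special cases (degrees $2$ and $3$, real quartics, all roots collinear, a single distinct critical point, and $\beta$ near $1$ for each fixed degree) and offers numerical evidence for a value of $c$. Your proposal, candidly, is in the same position --- it is a program rather than a proof, and you say as much in your final sentence. So the honest verdict is that there is a gap, but it is the same gap the paper leaves open: the passage from a per-degree constant $c_n$ to a single $c>0$ valid for all degrees. You identify this correctly as the principal obstacle, and nothing in your outline closes it. Your treatment of the special cases largely parallels the paper's: degree $2$ is identical; for collinear roots the paper telescopes the product $n\prod|\zeta_i-\beta|=\prod|z_i-\beta|$ and invokes the intersecting-chords theorem rather than estimating the logarithmic-derivative sum, but both exploit interlacing; for one distinct critical point both reduce to roots on a circle about $\zeta$; for degree $3$ the paper simply quotes Rahman's closed form for $r_3(\beta)$, and for real quartics it proves the clean bound $|P'(\beta)|\le(1+\beta)^2$ rather than resorting to interval arithmetic.

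One step in your outline would fail as stated. In the regime $\beta\to 1$ you claim the nearest critical point recedes ``at a rate bounded away from $0$ uniformly in $n$,'' yielding the quadratic bound ``on a fixed left-neighborhood of $1$.'' The linear coefficient is indeed uniformly below $-3/10$ by Miller's asymptotics, which is what the paper's Theorem~\ref{theorem_6} uses, but the error term $\mathcal O(1-\beta)^2$ there is \emph{not} uniform in $n$: the paper explicitly notes that ``sufficiently close'' depends on the degree. A degree-independent neighborhood of $\beta=1$ would be a substantial new theorem, not a routine linearization around $z^n-1$, precisely because the extremal configurations for large $n$ degenerate (high-multiplicity circle roots with critical points collapsing toward a point). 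Until either that uniformity or a uniform lower bound $\inf_n c_n>0$ is established, the statement remains, as in the paper, a conjecture.
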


In section~\ref{section_2}, we investigate Conjecture \ref{conjecture_1}
for polynomials of low degree.  In section~\ref{section_3}, we examine 
three other special cases.  In section~\ref{section_4}, we present several 
conjectures, including an estimate for the constant $c$ in Conjecture 
\ref{conjecture_1}.  Finally in section~\ref{section_5}, we look at an 
implication of this conjecture.

\section{Results for small $n$}\label{section_2} 

We begin our search for the constant $c$ in Conjecture \ref{conjecture_1} with

\begin{lemma}\label{lemma_2}
   For every $\beta \in [0,1]$, we have $r_n(\beta) \le
   1-\frac{4-n}{4}\beta(1-\beta)$.
\end{lemma}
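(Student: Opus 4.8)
The plan is to extract one elementary estimate on $d(P,\beta)$ — which is already sharp at the endpoints of $[0,1]$ and for $n=2$ — and then to dispose of the remaining degrees and values of $\beta$ either by that estimate, by the partial results on Sendov's conjecture recalled in Section~\ref{section_1}, or, for $n=3$ (the delicate case), by a direct computation with cubics. First reduce: we may assume $P$ is monic, $0<\beta<1$, and that $\beta$ is a simple root of $P$, since if $\beta$ is a multiple root then $P'(\beta)=0$ and $d(P,\beta)=0$, while at $\beta\in\{0,1\}$ the asserted bound reads $r_n(\beta)\le1$, which is the content of $r_n(0)=n^{-1/(n-1)}$ and $r_n(1)=1$. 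Write $P(z)=(z-\beta)Q(z)$ with $\deg Q=n-1$ and all roots $z_2,\dots,z_n$ of $Q$ in the closed unit disk, and $P'(z)=n\prod_{j=1}^{n-1}(z-\zeta_j)$. Evaluating the two expressions for $P'$ at $z=\beta$ gives $\prod_{i=2}^{n}|\beta-z_i|=|P'(\beta)|=n\prod_{j=1}^{n-1}|\beta-\zeta_j|$, and since $d(P,\beta)\le|\beta-\zeta_j|$ for each $j$ while $|\beta-z_i|\le1+\beta$ for each $i$,
\[
   d(P,\beta)\ \le\ \Bigl(\tfrac1n\prod_{i=2}^{n}|\beta-z_i|\Bigr)^{1/(n-1)}\ \le\ \frac{1+\beta}{n^{1/(n-1)}};
\]
this is in substance the estimate behind the value $r_n(0)=n^{-1/(n-1)}$ of Bojanov, Rahman and Szynal.

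This inequality already settles $n=2$, where in fact $r_2(\beta)=(1+\beta)/2$ and $(1+\beta)/2\le 1-\tfrac12\beta(1-\beta)$ reduces to $(1-\beta)^2\ge0$; and it gives $d(P,\beta)\le1$, hence the claim for every $n\ge4$, on the fixed neighbourhood $0\le\beta\le n^{1/(n-1)}-1$ of $\beta=0$. For $n\ge5$ the target $1-\tfrac{4-n}{4}\beta(1-\beta)$ exceeds $1$ except near the endpoints, so the unconditional bound $r(\beta)\le1.0753829$ of Section~\ref{section_1} settles the bulk of $[0,1]$ as soon as $\beta(1-\beta)\ge 4(0.0753829)/(n-4)$; degrees $5\le n\le8$ are covered by the verification of Sendov's conjecture in low degree (giving $r_n(\beta)\le1$), and for every $n$ the two short strips about the endpoints are covered by the elementary bound above (near $0$) and by the verification of Sendov's conjecture for $|\beta|$ near $1$ (near $1$), where $r_n(\beta)\le1$ is all that is needed since $n\ge4$. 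This is routine bookkeeping once those inputs are in place.

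What remains is $n=3$, the one case that genuinely forces the bound below $1$. Here $P(z)=(z-\beta)(z-a)(z-b)$ with $|a|,|b|\le1$, and writing $P'(z)=3(z-\zeta_1)(z-\zeta_2)$ one computes
\[
   (\beta-\zeta_1)(\beta-\zeta_2)=\tfrac13(\beta-a)(\beta-b),\qquad
   (\beta-\zeta_1)+(\beta-\zeta_2)=\tfrac23\bigl((\beta-a)+(\beta-b)\bigr).
\]
Using $\min(|u|,|v|)\le\sqrt{|uv|}$ and $\min(|u|,|v|)\le 2|uv|/|u+v|$ for complex $u,v$ (the second when $u+v\ne0$) we get
\[
   d(P,\beta)\ \le\ \min\!\left(\sqrt{\tfrac13\,|\beta-a|\,|\beta-b|}\ ,\ \ \frac{|\beta-a|\,|\beta-b|}{\bigl|(\beta-a)+(\beta-b)\bigr|}\right),
\]
and it remains to maximise the right-hand side over $a,b$ in the closed unit disk. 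After using the symmetry $a\leftrightarrow b$ and reflection in the real axis this is a low-dimensional real optimisation, whose extremum I expect to be attained with $a$ and $b$ on the unit circle and to come out no larger than $1-\tfrac14\beta(1-\beta)$; Section~\ref{section_2} will carry out a sharper version of exactly this computation.

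The main obstacle is this cubic optimisation, and within it the range where $\beta$ is close to $1$, since there the target $1-\tfrac14\beta(1-\beta)$ lies only barely below $1$ and neither the elementary product estimate nor the bare inequality $r_3(\beta)\le1$ suffices; by contrast the large-$n$ range, the degrees $n\le8$, and both endpoint strips are handled mechanically once the elementary bound and the cited partial verifications of Sendov's conjecture are invoked.
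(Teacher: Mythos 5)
Your proposal has two genuine gaps and as written does not prove the lemma. First, $n=3$: apart from $n=2$, this is the only degree for which the asserted bound $1-\tfrac14\beta(1-\beta)$ lies strictly below $1$ on $(0,1)$, i.e.\ is strictly stronger than Sendov's conjecture, and your treatment of it ends with a two-variable optimisation over $a,b$ in the closed unit disk ``whose extremum I expect to \dots\ come out no larger than $1-\tfrac14\beta(1-\beta)$.'' That expectation is the entire content of the case and is never verified. (It is true --- Rahman's exact formula $r_3(\beta)=[3\beta+\sqrt{12-3\beta^2}\,]/6$, used in Theorem~\ref{theorem_3}, gives $c_3=1/3>1/4$ --- but you neither invoke that formula nor carry out your own optimisation.) Second, for $n\ge 9$ your covering of $[0,1]$ leaves a strip of width on the order of $0.3/(n-4)$ next to $\beta=1$, where neither the elementary bound $d(P,\beta)\le(1+\beta)n^{-1/(n-1)}$ nor the unconditional bound $r(\beta)\le1.0753829$ reaches the target $1+\tfrac{n-4}{4}\beta(1-\beta)$, and you propose to close it with ``the verification of Sendov's conjecture for $|\beta|$ near $1$.'' You cite no specific result, and the available ones do not supply the required quantitative strength: Miller's asymptotics (used in the paper's Theorem~\ref{theorem_6}) give, for each $n$, a neighbourhood of $1$ of unspecified size, and the explicit near-the-circle results in the literature give neighbourhoods far narrower than $1/n$. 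So there is no guarantee that your ``near $1$'' region meets your ``bulk'' region, and the covering argument fails.

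The parts you do prove are correct --- the identity $n\prod_j|\beta-\zeta_j|=\prod_i|\beta-z_i|$, the consequent bound $d(P,\beta)\le(1+\beta)n^{-1/(n-1)}$, the $n=2$ computation, and the cubic identities with $\min(|u|,|v|)\le\sqrt{|uv|}$ and $\min(|u|,|v|)\le 2|uv|/|u+v|$ --- but the overall strategy is also far heavier than necessary. The paper's proof is four lines: Schmeisser's bound $r_n(\beta)\le\frac{n+2\beta-\beta^2(n-2)}{n+2-\beta(n-2)}$ \cite[Theorem 7.3.16]{Rahman-Schmeisser} is valid for every $n\ge2$ and every $\beta\in[0,1]$, and an algebraic rearrangement shows it is at most $1+(1-\beta)(n-4)\beta/4$, which is exactly the claim; no case split on $n$, no appeal to Brown--Xiang, and no covering of $[0,1]$ is needed. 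To salvage your route you would have to complete the cubic optimisation and locate a citable, explicit near-the-circle theorem of width substantially larger than $1/n$; it is much simpler to start from Schmeisser's inequality.
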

\begin{proof}
By a result of Schmeisser \cite[Theorem 7.3.16]{Rahman-Schmeisser} we have
\begin{align*}
r_n(\beta)  &\le \frac{n+2\beta-\beta^2(n-2)}{n+2-\beta(n-2)}\\
            &= 1+(1-\beta)\frac{(n-2)\beta-2}{n+2-\beta(n-2)}\\
            &= 1+(1-\beta)\left[\frac{(n-4)\beta}{4}
                 -\frac{(1-\beta)[8+(n-2)(n-4)\beta]}{4[n+2-\beta(n-2)]}\right]\\
            &\le 1+(1-\beta)\frac{(n-4)\beta}{4}
\end{align*}
and we are done.
\end{proof}

Now define $\displaystyle c_n(\beta)=\frac{1-r_n(\beta)}{\beta(1-\beta)}$ and 
$c_n=\inf \{c_n(\beta) : 0 < \beta < 1\}$.
We know this infimum exists since by Lemma \ref{lemma_2}, $c_n(\beta)$
is bounded below by $(4-n)/4$.  Note that $c_n$ is the largest number 
such that $r_n(\beta) \le 1-c_n\beta(1-\beta)$ for all $\beta \in [0,1]$.

\begin{theorem}\label{theorem_3}
We have $c_2=1/2$ and $c_3=1/3$.
\end{theorem}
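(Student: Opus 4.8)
The plan is to compute $r_2(\beta)$ and $r_3(\beta)$ exactly and read the minimum of $c_n(\beta)$ off the resulting closed forms. The degree-$2$ case is immediate: a polynomial in $S(\beta)$ of degree $2$ is $\lambda(z-\beta)(z-w)$ with $|w|\le 1$, and its derivative vanishes only at $(\beta+w)/2$, so $d(P,\beta)=|\beta-w|/2$. Since $0\le\beta\le 1$ we have $|\beta-w|\le 1+\beta$, with equality at $w=-1$, so $r_2(\beta)=(1+\beta)/2$ and hence $c_2(\beta)=\dfrac{1-(1+\beta)/2}{\beta(1-\beta)}=\dfrac{1}{2\beta}$, whose infimum over $(0,1)$ is $\tfrac12$, approached as $\beta\to1$. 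Thus $c_2=\tfrac12$.

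For $n=3$ I would first exhibit the extremal polynomial and then prove it is extremal. For the lower bound, take the one-parameter family $P_c(z)=(z-\beta)(z^2-2cz+1)$, $c\in[-1,1]$, whose non-$\beta$ roots form a conjugate pair on the unit circle. Its two critical points $w_1,w_2$ satisfy $(w_1-\beta)(w_2-\beta)=\tfrac13 P_c'(\beta)=\tfrac13(\beta^2-2c\beta+1)$, and a direct computation shows that $c=c_-:=\frac{\beta-\sqrt{12-3\beta^2}}{4}$ makes $P_c'$ have a double zero, located at $\frac{2c_-+\beta}{3}$, whence $d(P_{c_-},\beta)=\frac{3\beta+\sqrt{12-3\beta^2}}{6}$. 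Checking that no other value of $c$ does better then gives
\[
  r_3(\beta)\ge\frac{3\beta+\sqrt{12-3\beta^2}}{6}.
\]
This is consistent with the known endpoint values $r_3(0)=1/\sqrt3$ and $r_3(1)=1$ (indeed $P_{c_-}(z)=z^3-1$ when $\beta=1$).

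For the matching upper bound I would show no cubic does better. For $P(z)=(z-\beta)(z-a)(z-b)$ with critical points $w_1,w_2$ one has $d(P,\beta)^2\le|(w_1-\beta)(w_2-\beta)|=\tfrac13|\beta-a|\,|\beta-b|$, and the key point is that near-equality forces the two critical points to be nearly equidistant from $\beta$, which (via $|a|=|b|=1$ and the geometry of the unit disk) can only happen when $a$ and $b$ are close to a conjugate pair with suitably spread arguments; together with the usual reductions this pins the extremum down to the family above, giving $r_3(\beta)=\frac{3\beta+\sqrt{12-3\beta^2}}{6}$. I expect this reduction to be the main obstacle: the bound $d^2\le\tfrac13|\beta-a||\beta-b|$ alone only yields the far too weak $r_3(\beta)\le(1+\beta)/\sqrt3$, so the argument must quantify how far from balanced an unbalanced configuration must be (and must also rule out interior roots); note too that Schmeisser's bound from Lemma~\ref{lemma_2} is itself not sharp enough here, as it gives $c_3(\beta)<\tfrac13$ for $\beta$ near $0.9$.

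Granting the exact value of $r_3$, the rest is routine. We get $c_3(\beta)=\dfrac{6-3\beta-\sqrt{12-3\beta^2}}{6\beta(1-\beta)}$, and $c_3(\beta)\ge\tfrac13$ is equivalent to $2\beta^2-5\beta+6\ge\sqrt{12-3\beta^2}$; the left side is positive since its discriminant is negative, and squaring reduces the inequality to $(\beta-1)^2(\beta^2-3\beta+6)\ge 0$, which holds because $\beta^2-3\beta+6$ has negative discriminant. Finally L'Hopital (or a Taylor expansion) at $\beta=1$ gives $\lim_{\beta\to1}c_3(\beta)=\tfrac13$, so the minimum is exactly $\tfrac13$ and $c_3=\tfrac13$.
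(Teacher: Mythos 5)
Your degree-$2$ argument and your endgame for degree $3$ are fine: $r_2(\beta)=(1+\beta)/2$ gives $c_2(\beta)=1/(2\beta)$ with infimum $1/2$, the family $P_c(z)=(z-\beta)(z^2-2cz+1)$ with $c=\bigl(\beta-\sqrt{12-3\beta^2}\bigr)/4$ does produce a double critical point at distance $\bigl(3\beta+\sqrt{12-3\beta^2}\bigr)/6$ from $\beta$, and your algebraic verification that $c_3(\beta)\ge 1/3$ (via $2\beta^2-5\beta+6\ge\sqrt{12-3\beta^2}$, which squares to $4(\beta-1)^2(\beta^2-3\beta+6)\ge0$) is correct and is a legitimate alternative to the paper's one-line estimate $12-3\beta^2\le[3+\tfrac12(1-\beta^2)]^2$. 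You are also right that Lemma~\ref{lemma_2} is too weak here and that $d(P,\beta)^2\le\tfrac13|\beta-a||\beta-b|$ alone is hopeless.

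The genuine gap is the upper bound $r_3(\beta)\le\bigl(3\beta+\sqrt{12-3\beta^2}\bigr)/6$, which is the entire content of the step you defer. The paper does not prove this either: it cites Rahman's Theorem~2 (`On the zeros of a polynomial and its derivative', \emph{Pacific J.\ Math.} 41 (1972)), which is exactly this identity for $r_3(\beta)$. Your proposed route --- ``near-equality forces the two critical points to be nearly equidistant from $\beta$, which \dots\ pins the extremum down to the family above'' --- is not an argument but a description of what an argument would have to accomplish; it does not quantify the loss from an unbalanced pair $|w_1-\beta|\ne|w_2-\beta|$, does not rule out configurations with $|a|<1$ or $|b|<1$ contributing via a different mechanism, and does not show that among conjugate pairs on the circle the double-root value of $c$ is optimal for the \emph{nearer} critical point (the quantity $d$ minimizes, not the product). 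You flag this yourself as ``the main obstacle,'' and it is: as written the proof establishes only $r_3(\beta)\ge\bigl(3\beta+\sqrt{12-3\beta^2}\bigr)/6$, hence only $c_3\le 1/3$, not $c_3=1/3$. Either carry out the extremal analysis in full or, as the paper does, invoke Rahman's theorem for the exact value of $r_3(\beta)$.
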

\begin{proof}
The result for $c_2$ follows trivially from the fact that
$r_2(\beta)={(1+\beta)}/2$. For polynomials of degree 3, Rahman has shown
\cite[Theorem 2]{Rahman} that
$r_3(\beta)=\big(3\beta+\sqrt{12-3\beta^2}\big)/6$.  Since 
\[
\sqrt{12-3\beta^2} \le \sqrt{12-3\beta^2 +(1/4)(1-\beta^2)^2} 
= 3+(1/2)(1-\beta^2),
\
\]
then
\[
   c_3(\beta) = \frac{1-r_3(\beta)}{\beta(1-\beta)}
              = \frac{6-3\beta-\sqrt{12-3\beta^2}}{6\beta(1-\beta)} 
              \ge \frac{5-\beta}{12\beta} \ge \frac{1}{3}.
\]
In addition, we have (via L'Hospital's rule)
\[ 
   \lim_{\beta\to1} c_3(\beta)
   = \lim_{\beta\to1} \frac{6-3\beta-\sqrt{12-3\beta^2}}{6\beta(1-\beta)} 
   = \frac{1}{3}
\]
and we are done.
\end{proof}

We now examine real polynomials of degree 4.

\begin{lemma}\label{lemma_4}
   For every monic real polynomial $P \in S(\beta)$ of degree $4$ with
   $d(P,\beta)>(1+\beta)/2$, we have $|P'(\beta)| \le (1+\beta)^2$.
\end{lemma}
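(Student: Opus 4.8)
\section*{Proof proposal}

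The plan is to turn the hypothesis into a statement about where the roots and critical points of $P$ sit, and then run a short case analysis on the shape of the root set. First I would write $P(z)=(z-\beta)Q(z)$ with $Q$ a monic real cubic whose three roots lie in the closed unit disk, and observe that since $P'(\beta)=Q(\beta)$ it suffices to show $|Q(\beta)|\le(1+\beta)^2$. The hypothesis $d(P,\beta)>(1+\beta)/2$ says precisely that $P'$ has no zero in the closed disk $|z-\beta|\le(1+\beta)/2$, while Gauss--Lucas puts every zero of $P'$ in the closed unit disk. Since $P$ is real, either (A) all three roots of $Q$ are real and in $[-1,1]$, or (B) $Q$ has one real root $a\in[-1,1]$ together with a conjugate pair $p\pm iq$ with $q>0$ and $p^2+q^2\le1$. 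If $\beta$ is a multiple root of $P$ then $Q(\beta)=0$ and there is nothing to prove, so I will assume $\beta$ is simple.

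For case (A) I would order the four roots of $P$. If $\beta$ is the smallest, the other roots lie in $[\beta,1]$, so $|Q(\beta)|\le(1-\beta)^3\le(1+\beta)^2$; if $\beta$ is interior there is a root on each side, and splitting $|Q(\beta)|=\prod_j|\beta-t_j|$ into the factors that are $\le1+\beta$ and those that are $\le1-\beta$ gives $|Q(\beta)|\le(1+\beta)^k(1-\beta)^{3-k}\le(1+\beta)^2$ with $1\le k\le2$. I expect the subcase ``$\beta$ the largest root'' to be vacuous: if $\gamma$ is the critical point that Rolle places between $\beta$ and the largest root of $Q$, then setting $P'/P=0$ at $\gamma$ and bounding each of the three terms $1/(\gamma-t_j)$ below by $1/(\gamma+1)$ yields $\gamma\ge(3\beta-1)/4$, so $d(P,\beta)\le\beta-\gamma\le(1+\beta)/4<(1+\beta)/2$, contrary to hypothesis. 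That would settle case (A) entirely.

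For case (B) there are only the two real roots $\beta$ and $a$, and Rolle supplies a real critical point $\gamma$ strictly between them. If $a>\beta$, then $|Q(\beta)|=(a-\beta)\bigl((\beta-p)^2+q^2\bigr)\le(1-\beta)(1+\beta)^2\le(1+\beta)^2$, using $(\beta-p)^2+q^2\le\beta^2-2\beta p+1\le(1+\beta)^2$ on $p^2+q^2\le1$; so assume $a<\beta$. Then $\gamma\in(a,\beta)$, the hypothesis forces $\gamma<(\beta-1)/2$, and evaluating $P'/P$ at $\gamma$ while isolating the pole at $\beta$ gives
\[
   \frac1{\gamma-a}+\frac{2(\gamma-p)}{(\gamma-p)^2+q^2}=\frac1{\beta-\gamma}<\frac2{1+\beta}.
\]
If $\gamma\ge p$ the middle term is nonnegative, so $\gamma-a>(1+\beta)/2$; but $\gamma-a\le\gamma+1<(1+\beta)/2$, a contradiction. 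Hence the only surviving configuration has $a<\gamma<p$ with $\gamma<(\beta-1)/2$, and the task reduces to bounding $(\beta-a)\bigl((\beta-p)^2+q^2\bigr)$ using the displayed identity together with the corresponding ``no zero of $P'$ near $\beta$'' conditions for the remaining two critical points of $P$ and Gauss--Lucas.

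This last estimate is where I expect the real work to lie. The crude bounds $\beta-a\le1+\beta$ and $(\beta-p)^2+q^2\le(1+\beta)^2$ throw away a full factor of $1+\beta$, and recovering it forces one to use the coupling between the position of $a$, the position of the conjugate pair, and the fact that the real critical point has been driven below $(\beta-1)/2$ --- a nonconvex constraint linking all the data. I would try to show that the maximum of $(\beta-a)\bigl((\beta-p)^2+q^2\bigr)$ under these constraints is attained at a boundary configuration (say $a=-1$, or a critical point lying exactly on the circle $|z-\beta|=(1+\beta)/2$, the situation realized by $P(z)=(z-\beta)(z+1)(z^2+q^2)$ near $\beta=1$), collapsing the problem to a one- or two-parameter inequality --- for that family $|P'(\beta)|=(1+\beta)(\beta^2+q^2)\le(1+\beta)^2$ simply because $q\le1$ and $\beta\le1$ --- and then verify the resulting inequality by elementary calculus, or, failing that, by a short rigorous numerical check of the kind the paper's computing resources make routine.
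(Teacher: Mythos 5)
Your reduction is sound as far as it goes --- the real-root case (A) is fully and correctly dispatched (including the nice Rolle/logarithmic-derivative argument showing ``$\beta$ largest'' contradicts the hypothesis), and in case (B) you correctly eliminate $a>\beta$ and $\gamma\ge p$. But the one configuration you leave open, $a<\gamma<p$ with $\gamma<(\beta-1)/2$, is precisely the crux of the lemma, and the constraints you have extracted do not suffice to close it: they give only $p>\gamma>a\ge-1$, so $p$ may still be arbitrarily close to $-1$, in which case $(\beta-a)\bigl((\beta-p)^2+q^2\bigr)$ can a priori be as large as about $(1+\beta)^3$. Your proposed finish (argue the maximum sits at a boundary configuration, then ``elementary calculus or a short rigorous numerical check'') is a plan, not a proof, so the argument has a genuine gap exactly where the hypothesis $d(P,\beta)>(1+\beta)/2$ has to do quantitative work.

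The missing idea is a \emph{lower} bound on $p=\Re(z_2)$, and the paper gets it by evaluating $P'$ at the midpoint $(\beta+a)/2$ of the two real roots rather than at the critical point $\gamma$. Since every real critical point lies in $[-1,(\beta-1)/2)$ while $(\beta+a)/2\ge(\beta-1)/2$, and $P'(x)>0$ for large $x$, one gets $P'\bigl((\beta+a)/2\bigr)>0$; at the midpoint the term carrying the factor $(2z-\beta-a)$ vanishes, leaving $P'\bigl(\tfrac{\beta+a}{2}\bigr)=-\tfrac{(\beta-a)^2}{4}(\beta+a-2p)$, whence $2p>\beta+a\ge\beta-1$, i.e.\ $p>(\beta-1)/2$. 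That single inequality yields $(\beta-p)^2+q^2\le 1-2\beta p+\beta^2\le 1+\beta$ --- a gain of a full factor $(1+\beta)$ over the crude bound --- and together with $\beta-a\le 1+\beta$ it gives $|P'(\beta)|\le(1+\beta)^2$ at once. If you replace your evaluation at $\gamma$ by this midpoint evaluation, your case analysis closes; note also that the paper shortcuts your case (A) entirely by first disposing of any root with $\Re(z)>\beta$ (such a root is within distance $1$ of $\beta$), so that the half-plane hypothesis needed for the midpoint sign argument holds automatically afterwards.
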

\begin{proof}
Write $P(z)=(z-\beta)\prod_{i=1}^3 (z-z_i)$ and note that
$P'(\beta)=\prod_{i=1}^3 (\beta-z_i)$.

If $P$ has a root (say) $z_1$ in the half-plane $\{z: \Re(z) > \beta \}$, 
then $|z_1-\beta| \le 1$.  For $i\in\{2, 3\}$, we have $|z_i-\beta| \le
1+\beta$, and so $|P'(\beta)|=\prod_{i=1}^3 |\beta-z_i|\le(1+\beta)^2$
and we are done.

Assume then that the half-plane $\{z: \Re(z) \le \beta \}$ contains
all roots of $P$ and hence also (by the Gauss-Lucas theorem
\cite[Theorem 2.1.1]{Rahman-Schmeisser}) all roots of~$P'$. Since by
hypothesis $d(P,\beta)>(1+\beta)/2$, then any real roots of $P'$ would
be in the interval $[-1, (\beta-1)/2)$.  Now $P$ is a real polynomial
of even degree with a real root $\beta$, hence $P$ has another real
root (say) $z_1 \ge -1$.  Since $P'$ has no real roots in the interval
$[(\beta+z_1)/2,\infty)$ and since $P'(x)>0$ for large real $x$, then
$P'((\beta+z_1)/2)>0$.  Now
\[
   P'(z)=(z-\beta)(z-z_1)(2z-z_2-z_3)+(2z-\beta-z_1)(z-z_2)(z-z_3),
\]
so 
\[
   0 < P'\left(\frac{\beta+z_1}{2}\right)
   = -\frac{(\beta-z_1)^2}{4}(\beta+z_1-z_2-z_3),
\]
and so we have $z_2+z_3 > \beta+z_1 \ge \beta-1$.  Now $z_2$ and $z_3$
cannot both be real, else the interval $((\beta-1)/2,\beta]$ would
contain the larger of the two as well as $\beta$, and hence by Rolle's
theorem a real root of $P'$.  Thus $z_2$ and $z_3$ must be complex
conjugates with $\Re(z_2)=\Re(z_3)>(\beta-1)/2$, so for $i\in\{2, 3\}$ we
have
\[
   |z_i-\beta|^2 = z_i \bar z_i - 2\beta\Re(z_i) + \beta^2 
   \le 1 -2\beta(\beta-1)/2 + \beta^2 = 1+\beta.
\]
Note that $|z_1-\beta| \le 1+\beta$, and so $|P'(\beta)|=\prod_{i=1}^3
|\beta-z_i|\le(1+\beta)^2$ and we are done.
\end{proof}

We mention in passing that Lemma \ref{lemma_4} may fail for nonreal
polynomials, as can be seen by choosing $\beta=0.674$ and
\[
P(z)=\int_\beta^z 4(w+0.24-0.38i)(w+0.13+0.25i)^2\,dw.
\]
A numerical calculation establishes that the roots of $P$ have
moduli less than $1$, so $P \in S(\beta)$.  However, $d(P,\beta)
\approx 0.842 > 0.837 = (1+\beta)/2$, but $|P'(\beta)| \approx 2.807 >
2.802 \approx (1+\beta)^2$.

Finally, we prove Conjecture \ref{conjecture_1} for all real polynomials
of degree 4 with
\begin{theorem}\label{theorem_5}
   For every real polynomial $P \in S(\beta)$ of degree $4$ we have
   $d(P, \beta) \le 1-(1/3)\beta(1-\beta)$.
\end{theorem}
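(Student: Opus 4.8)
The plan is to bound $d(P,\beta)$ by cases on its size, applying Lemma~\ref{lemma_4} in the delicate range and reducing everything else to a one–variable inequality. We may assume $P$ is monic, since scaling $P$ by a nonzero constant alters neither its roots nor those of $P'$, hence affects neither $d(P,\beta)$ nor membership in $S(\beta)$; write $w_1,w_2,w_3$ for the roots of $P'$, so that $P'(z)=4(z-w_1)(z-w_2)(z-w_3)$. If $d(P,\beta)\le(1+\beta)/2$ we are already done, because $1-\tfrac13\beta(1-\beta)-\tfrac{1+\beta}{2}=\tfrac16(1-\beta)(3-2\beta)\ge0$ on $[0,1]$ (equivalently, $c_2=\tfrac12\ge\tfrac13$), so $d(P,\beta)\le\tfrac{1+\beta}{2}\le1-\tfrac13\beta(1-\beta)$.

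Suppose instead $d(P,\beta)>(1+\beta)/2$. Then Lemma~\ref{lemma_4} applies and gives $|P'(\beta)|\le(1+\beta)^2$; since every $|\beta-w_i|$ is at least $d(P,\beta)$, we get $4\,d(P,\beta)^3\le 4\,|\beta-w_1|\,|\beta-w_2|\,|\beta-w_3|=|P'(\beta)|\le(1+\beta)^2$, that is, $d(P,\beta)\le\bigl((1+\beta)/2\bigr)^{2/3}$. It then remains to check that $\bigl((1+\beta)/2\bigr)^{2/3}\le1-\tfrac13\beta(1-\beta)$ for all $\beta\in[0,1]$, which I would handle with the substitution $t=(1+\beta)/2\in[\tfrac12,1]$. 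A short computation recasts this as $g(t)\ge0$ for $g(t)=\tfrac43t^2-2t+\tfrac53-t^{2/3}$, and one verifies $g(1)=0$, $g'(1)=0$, and $g''(t)=\tfrac83+\tfrac29t^{-4/3}>0$ on $(0,\infty)$; hence $g$ is strictly convex with global minimum $0$ at $t=1$, so $g\ge0$ on $[\tfrac12,1]$, finishing this case.

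The heavy lifting — controlling the roots and extracting $|P'(\beta)|\le(1+\beta)^2$ once $d(P,\beta)$ is large — has been packaged into Lemma~\ref{lemma_4}, which is also the sole point where realness of $P$ is used; granting that lemma, the argument above is elementary. The one point to watch is that the exponent $2/3$ is just small enough for $\bigl((1+\beta)/2\bigr)^{2/3}$ to remain below $1-\tfrac13\beta(1-\beta)$ on the whole interval, with the two curves tangent precisely at $\beta=1$ — the feature, made transparent by $g''>0$, that forces the constant obtained here to be $\tfrac13$ (matching $c_3$) and no larger.
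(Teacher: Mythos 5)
Your proof is correct and follows essentially the same route as the paper: split on whether $d(P,\beta)\le(1+\beta)/2$, invoke Lemma~\ref{lemma_4} in the remaining case to get $4\,d(P,\beta)^3\le(1+\beta)^2$, and finish with an elementary one-variable inequality. The only (cosmetic) difference is in verifying that last inequality: the paper chains $(1+\beta)^2\le4(1-\beta(1-\beta))\le4(1-\beta(1-\beta)/3)^3$ using $1-x\le(1-x/3)^3$, while you substitute $t=(1+\beta)/2$ and use convexity of $g(t)=\tfrac43t^2-2t+\tfrac53-t^{2/3}$ with $g(1)=g'(1)=0$; both are valid.
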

\begin{proof}
If $d(P, \beta) \le (1+\beta)/2$, then 
$d(P, \beta) \le (1+\beta)/2+(1/6)(1-\beta)(3-2\beta)
   =1-(1/3)\beta(1-\beta)$ 
and we are done.  Assume then that $d(P,\beta)>(1+\beta)/2$, assume 
(without loss of generality) that $P$ is monic and write 
$P'(z)=4\prod_{i=1}^3 (z-\zeta_i)$.  Then using Lemma
\ref{lemma_4} we have
\begin{equation}\label{eqn2.2}
   4(d(P, \beta))^3 \le 4\prod_{i=1}^3 |\zeta_i-\beta| = |P'(\beta)|
   \le (1+\beta)^2.
\end{equation}

Now $(1+\beta)^2\le (1+\beta)^2+3(1-\beta)^2=4(1-\beta(1-\beta))$.
In addition, $1-x \le 1-x+(x^2/27)(9-x)=(1-x/3)^3$ for $0 \le x \le 1$,
so letting $x=\beta(1-\beta)$ gives us
\begin{equation}\label{eqn2.3}
   (1+\beta)^2 \le 4(1-\beta(1-\beta)) \le 4(1-\beta(1-\beta)/3)^3.
\end{equation}

Combining lines \ref{eqn2.2} and \ref{eqn2.3} gives us that $4(d(P,
\beta))^3 \le 4(1-\beta(1-\beta)/3)^3$ and our result follows.
\end{proof}

\section{Special cases}\label{section_3} 

We next show that Conjecture \ref{conjecture_1} is true provided that
$\beta$ is sufficiently close to~1 (where ``sufficiently close''
depends on the degree of the polynomial), using

\begin{theorem}\label{theorem_6}
   For every integer $n \ge 2$, if $\beta$ is sufficiently close to
   $1$, then we have $r_n(\beta) \le 1-(3/10)\beta(1-\beta)$.
\end{theorem}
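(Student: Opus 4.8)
The plan is to exploit the known behavior of $r_n$ at the endpoint $\beta = 1$ together with a perturbation estimate. We know from Rubinstein's result that $r_n(1) = 1$ for every $n \ge 2$. What we really need is not just continuity of $r_n$ near $1$, but a quantitative bound on how fast $r_n(\beta)$ can increase as $\beta$ decreases from $1$ — specifically we want $r_n(\beta) \le 1 - (3/10)(1-\beta) + o(1-\beta)$ as $\beta \to 1$, which (since $\beta(1-\beta) = (1-\beta) - (1-\beta)^2$) would immediately give $r_n(\beta) \le 1 - (3/10)\beta(1-\beta)$ for $\beta$ close enough to $1$. So the heart of the matter is an inequality of the form $r_n(\beta) \le 1 - \lambda_n(1-\beta) + O((1-\beta)^2)$ with $\limsup$ constant $\lambda_n$ strictly greater than $3/10$.

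First I would recall the Schmeisser bound used in Lemma \ref{lemma_2}: $r_n(\beta) \le \frac{n+2\beta-\beta^2(n-2)}{n+2-\beta(n-2)}$. Substituting $\beta = 1 - t$ and expanding in powers of $t$, the right-hand side equals $1 - \frac{?}{?}\,t + O(t^2)$; computing the linear coefficient, one finds it equals $\tfrac{1}{2}$ when $n = 2$ (consistent with $r_2(\beta) = (1+\beta)/2$) but it degrades as $n$ grows, tending to $0$. So the crude Schmeisser bound alone is not enough for large $n$, and this is the main obstacle: I need a sharper near-$\beta=1$ estimate whose linear coefficient stays bounded away from zero uniformly... except the theorem only claims the result for $\beta$ sufficiently close to $1$ \emph{depending on $n$}, so uniformity in $n$ is not required. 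That relaxes the problem considerably: for each fixed $n$, I just need the linear coefficient $\lambda_n$ at $\beta=1$ to exceed $3/10$.

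Thus the real question becomes: is $\liminf_{\beta\to 1}\frac{1-r_n(\beta)}{1-\beta} > 3/10$ for every $n$? I would attack this by a compactness/extremal-polynomial argument. Fix $\beta = 1-t$ with $t$ small. Take a near-extremal $P \in S(1-t)$ of degree $n$ with $d(P,\beta)$ close to $r_n(\beta)$; normalize and pass to a subsequence as $t \to 0$ so that $P$ converges to an extremal polynomial $P_0 \in S(1)$ with $d(P_0, 1) = r_n(1) = 1$. By Rubinstein's analysis (or the standard description of the degree-$n$ extremizers at $\beta = 1$), such $P_0$ must be of the essentially unique form $P_0(z) = (z-1)(z^{n-1} - a)$-type up to the known classification, and crucially $P_0$ has a critical point $\zeta_0$ with $|\zeta_0 - 1| = 1$ lying on the unit circle in a specific location. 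The plan is then a first-variation computation: perturbing the root at $1$ to $1 - t$ and optimally adjusting the other roots within the unit disk, one computes $\frac{d}{dt}\big|_{t=0^+}$ of the largest achievable $d(P,\beta)$, which is a finite negative number; its negative is $\lambda_n$. One then checks $\lambda_n > 3/10$ — this should hold comfortably since the constraint that all roots stay in the \emph{closed} disk forces the critical point back by a definite amount once the root at $1$ moves inward.

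The step I expect to be the genuine obstacle is making the first-variation argument rigorous: one must rule out the possibility that the $\limsup$ is achieved along a sequence of polynomials whose limiting structure is \emph{not} the smooth Rubinstein extremizer (e.g. roots colliding, or the relevant critical point being non-simple so that $d(P,\beta)$ is not a smooth function of the roots). Handling the non-smooth case likely requires either invoking the known full classification of degree-$n$ Sendov extremizers at $\beta = 1$ (so that there are only finitely many limiting configurations to check, each by an explicit computation) or a direct but more delicate estimate bounding $d(P, 1-t)$ in terms of $|P'(1-t)|$ and the location of roots, in the spirit of Lemma \ref{lemma_4} and Theorem \ref{theorem_5} but carried out to first order in $t$. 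Given that the paper already invokes Rubinstein \cite{Rubinstein} for $r_n(1) = 1$, I would lean on that paper's description of the extremal polynomials and reduce Theorem \ref{theorem_6} to a finite check of linear coefficients, each of which is bounded below by a constant exceeding $3/10$.
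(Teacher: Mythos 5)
Your proposal correctly identifies what needs to be proved --- that $1-r_n(\beta)$ has linear coefficient in $(1-\beta)$ at least $3/10$, with enough slack or higher-order information to absorb the $(1-\beta)^2$ discrepancy between $\beta(1-\beta)$ and $(1-\beta)$ --- but it does not actually prove it. You sketch a compactness/first-variation strategy (pass to an extremal limit at $\beta=1$, use Rubinstein's classification of extremizers, compute $\lambda_n$ by perturbation) and you yourself flag that making this rigorous is ``the genuine obstacle,'' in particular ruling out degenerate limiting configurations where $d(P,\beta)$ is not smooth in the roots. That obstacle is not overcome in the proposal; the key quantitative claim $\lambda_n>3/10$ is asserted to ``hold comfortably'' without verification. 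The paper sidesteps all of this by citing Miller's prior theorem (Trans.\ Amer.\ Math.\ Soc.\ 357 (2005)), which already provides the exact asymptotic expansion $r_n(\beta)=1+c_n(1-\beta)+\mathcal O(1-\beta)^2$ with explicit constants $c_n$; the entire analytic content of your sketch is the content of that earlier paper, so your proposal amounts to a plan for reproving it rather than a proof of Theorem~\ref{theorem_6}.

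There is also a concrete point at which your plan, even if executed, would fail: for $n=5$ the linear coefficient is exactly $-3/10$, so your proposed strict inequality $\liminf_{\beta\to1}\frac{1-r_n(\beta)}{1-\beta}>3/10$ is false, and no first-order argument can close the case. The paper handles $n=5$ separately by going to second order, using $r_5(\beta)=1-(3/10)(1-\beta)+(1/200)(1-\beta)^2+\mathcal O(1-\beta)^3$ and comparing the coefficient $1/200$ with the $(3/10)(1-\beta)^2$ term coming from $1-(3/10)\beta(1-\beta)$. Any correct proof must isolate this borderline case, and your proposal does not.
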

\begin{proof}
Given Theorem \ref{theorem_3}, we may assume that $n \ge 4$.  Note that
\[
   1-(3/10)(1-\beta)+(3/10)(1-\beta)^2 =
   1-(3/10)\beta(1-\beta).
\]

If $n \ne 5$, then Miller has shown \cite[Theorem 1 and part 6 of
Lemma 8, using $n \ne 4$ and $c_{n+1}=D_1+D_2/n$]{Miller} that
$r_n(\beta) = 1+c_n(1-\beta)+\mathcal O(1-\beta)^2$ with $c_n <
-3/10$, so $r_n(\beta)<1-(3/10)(1-\beta)$ when $\beta$ is sufficiently
close to $1$ and the result follows.

If $n=5$, then Miller has shown \cite[Theorem 1, using $n=4$,
$D_1=-1/5$, $D_2=-2/5$, $D_3=0$, $D_4=0$, $D_5=1/25$ and
$D_6=-2/25$]{Miller} that 
\[
   r_5(\beta) = 1-(3/10)(1-\beta) +(1/200)(1-\beta)^2
   +\mathcal O(1-\beta)^3
\]
and the result follows.
\end{proof}

We now show that Conjecture \ref{conjecture_1} is true for polynomials
with all roots on a line, via
\begin{theorem} \label{theorem_7}
   If all the roots of $P \in S(\beta)$ lie on a line, then 
 $d(P, \beta) \le 1-(1/2)\beta({1-\beta})$.
\end{theorem}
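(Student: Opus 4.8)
The plan is to collapse the problem to a one–dimensional statement about real polynomials and then split into two cases according to whether the roots other than $\beta$ all lie on one side of $\beta$ along the line. For the reduction I would rotate so that $\beta\in[0,1]$, scale so that $P$ is monic, and note that if $\beta$ is a repeated root then $P'(\beta)=0$ and $d(P,\beta)=0$, so I may assume $\beta$ is simple and $P$ has a root $\ne\beta$. The line through the roots passes through $\beta$; write it as $\{\beta+te^{i\theta}:t\in\mathbb R\}$, so the roots are $z_k=\beta+t_ke^{i\theta}$ with $t_0=0$ and all $t_k$ real. Let $[-b,a]$, $a,b\ge0$, be the set of real $t$ with $|\beta+te^{i\theta}|\le1$; then every $t_k\in[-b,a]$. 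Two facts drive everything: the endpoints $\beta+ae^{i\theta}$ and $\beta-be^{i\theta}$ lie on the unit circle, so $a+b\le2$; and the power of the point $\beta$ (the intersecting–chords relation) gives $ab=1-\beta^2$; together these force $\max(a,b)\le1+\beta$. Setting $g(t)=\prod_k(t-t_k)$, a monic real polynomial with a simple root at $0$, one has $P(\beta+te^{i\theta})=e^{in\theta}g(t)$, and by Gauss--Lucas every critical point of $P$ lies on the line, so $d(P,\beta)=\min\{|s|:g'(s)=0\}$. Since $g$ has only real roots, so does $g'$; let $s_1\le\cdots\le s_{n-1}$ be its roots and $\tau_1\le\cdots\le\tau_n$ those of $g$, interlacing as $\tau_i\le s_i\le\tau_{i+1}$, with no $s_i=0$.

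In the first case all nonzero $t_k$ have one sign, say positive, so $\tau_1=0$, all critical points are $\ge0$, and $d(P,\beta)=s_1$ with $s_1\in(0,\tau_2)$ by Rolle's theorem. Reading off $0=g'(s_1)/g(s_1)=\tfrac1{s_1}+\sum_{j\ge1}\tfrac1{s_1-t_j}$ and using $t_j\ge\tau_2>s_1$ for each nonzero $t_j$, I would get $\tfrac1{s_1}=\sum_j\tfrac1{t_j-s_1}\ge\tfrac{n-1}{a-s_1}$, hence $s_1\le a/n\le(1+\beta)/2$. (The subcase of all negative $t_k$ is the mirror image and gives $s_1\le b/n\le(1+\beta)/2$.) Note this is essentially the degree–$2$ bound $r_2(\beta)=(1+\beta)/2$ resurfacing.

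The substantial case is when the nonzero $t_k$ take both signs, which forces $n\ge3$. Here, with $p,q\ge1$ the numbers of negative and positive roots, the two critical points flanking $0$ are $c_1=s_p\in[\tau_p,0]$ and $c_2=s_{p+1}\in[0,\tau_{p+2}]$, and interlacing shows every other $s_i$ is at least as far from $0$ as one of these, so $d(P,\beta)=\min(|c_1|,|c_2|)\le\sqrt{|c_1c_2|}$. The key point is to bound the \emph{product} $|c_1c_2|$, not either factor. Comparing the constant terms of $g'(t)=n\prod_i(t-s_i)$ and $g(t)=\prod_k(t-\tau_k)$ gives $\prod_i|s_i|=\tfrac1n\prod_{\tau_k\ne0}|\tau_k|$, so $|c_1c_2|=\tfrac1n\bigl(\prod_{\tau_k\ne0}|\tau_k|\bigr)\big/\bigl(\prod_{i\ne p,p+1}|s_i|\bigr)$; I would split this quotient into a negative part and a positive part, feed in the interlacing inequalities $|\tau_{i+1}|\le|s_i|\le|\tau_i|$, watch each part telescope down to $|\tau_1|\le b$ and $\tau_n\le a$ respectively, and conclude $|c_1c_2|\le ab/n=(1-\beta^2)/n\le(1-\beta^2)/3$. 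Thus $d(P,\beta)\le\sqrt{(1-\beta^2)/3}$.

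Finally I would check the two numerical inequalities: in the first case $1-\tfrac12\beta(1-\beta)-\tfrac{1+\beta}2=\tfrac12(1-\beta)^2\ge0$, and in the second case $1-\tfrac12\beta(1-\beta)\ge\tfrac78>1/\sqrt3\ge\sqrt{(1-\beta^2)/3}$ (using $\beta(1-\beta)\le\tfrac14$); either way $d(P,\beta)\le1-\tfrac12\beta(1-\beta)$. I expect the only real obstacle to be the second case: one must first spot that $1-\beta^2$ controls the product $|c_1c_2|$ rather than either distance separately, and then carry out the interlacing bookkeeping carefully — sorting the roots, correctly identifying the two central critical points versus the outer ones, and dealing with repeated roots and the degenerate boundary cases $p=1$ or $q=1$. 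The rest is routine.
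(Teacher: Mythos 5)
Your proposal is correct and follows essentially the same route as the paper's proof: reduce to the real line, use Rolle/Gauss--Lucas interlacing, exploit the identity $n\prod|s_i|=\prod_{\tau_k\neq 0}|\tau_k|$ (i.e.\ $P'(\beta)=Q(\beta)$), telescope the interlacing inequalities on each side of $\beta$, and split into the one-sided case (bound $(1+\beta)/n$) versus the two-sided case (bound $\sqrt{(1-\beta^2)/n}$ via the intersecting-chords relation $ab=1-\beta^2$). The only cosmetic differences are your use of the logarithmic derivative in the one-sided case and a cruder (but valid) final numerical comparison in the two-sided case.
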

\begin{proof}
Let $P\in S(\beta)$ be a polynomial of degree $n$ with all roots on a line.
Write $P(z)=(z-\beta)Q(z)$ and let $z_1,\dots,z_{n-1}$ be the roots of $Q$
and $\zeta_1, \dots, \zeta_{n-1}$ be the roots of the derivative $P'$.  
Since the roots of $P$ lie on a line, then (by a rotation around $\beta$) 
Rolle's Theorem implies that the roots of $P'$ also lie on that line, 
alternating with the roots of $P$.

If $P$ had a multiple root at $\beta$ then $d(P, \beta)=0$ and the theorem 
would be trivially true, so we may assume that every $z_i\ne\beta$.

Consider the roots and critical points of $P$ that lie to one side of $\beta$
on the line, and number them (by increasing distance from $\beta$) as
$\beta, \zeta_1, z_1, \zeta_2, z_2, \dots, \zeta_k, z_k$.  Note that
$|\zeta_1-\beta|\ge d(P, \beta)$ and $|z_k-\beta|\le1+\beta$ and 
$|\zeta_{i+1}-\beta| \ge |z_i-\beta|$.

Then
\begin{equation}\label{eqn_9.1}
   \prod_{i=1}^k \frac{|\zeta_i-\beta|}{|z_i-\beta|} 
    = |\zeta_1-\beta| 
        \left(\prod_{i=1}^{k-1} \frac{|\zeta_{i+1}-\beta|}{|z_i-\beta|}\right) 
             \frac{1}{|z_k-\beta|}
    \ge \frac{d(P, \beta)}{|z_k-\beta|}.
\end{equation}

Since $P'(\beta)=Q(\beta)$ then 
$n\prod_{i=1}^{n-1} |\zeta_i-\beta| = \prod_{i=1}^{n-1} |z_i-\beta|$.

If all the roots of $Q$ lie to one side of $\beta$, then by inequality
\ref{eqn_9.1} we have
\[
   \frac{1}{n}= \prod_{i=1}^{n-1} \frac{|\zeta_i-\beta|}{|z_i-\beta|} 
     \ge \frac{d(P, \beta)}{|z_{n-1}-\beta|}
     \ge \frac{d(P, \beta)}{1+\beta}
\] 
so 
\[
d(P, \beta)\le \frac{1+\beta}{n} 
  \le \frac{1+\beta}{2} + \frac{(\beta-1)^2}{2} = 1-(1/2)\beta(1-\beta).
\]

If the roots of $Q$ lie on both sides of $\beta$, then by applying inequality
\ref{eqn_9.1} separately to each side, we get
\[
   \frac{1}{n}= \prod_{i=1}^{n-1} \frac{|\zeta_i-\beta|}{|z_i-\beta|} 
     \ge \frac{[d(P, \beta)]^2}{|z_k-\beta||w_m-\beta|},
\] 
where $z_k$ and $w_m$ are the roots of $Q$ furthest from $\beta$ on opposite 
sides of the line.  Then Euclid's ``Intersecting Chords Theorem'' implies that
$|z_k-\beta||w_m-\beta|\le (1-\beta)(1+\beta)$ so
\[
d(P, \beta) \le \sqrt\frac{1-\beta^2}{n} 
   \le \sqrt{\frac{1-\beta^2}{2} + \frac{\beta^2+2(\beta-1)^2}{4}}
   = 1-\frac{\beta}{2} \le 1-(1/2)\beta(1-\beta).
\]

\end{proof}

We now show that Conjecture \ref{conjecture_1} is true for polynomials
with exactly one distinct critical point, via

\begin{theorem}\label{theorem_8}
For every polynomial $P \in S(\beta)$ with exactly one distinct critical 
point, $d(P, \beta) \le 1-(1/3)\beta(1-\beta)$.
\end{theorem}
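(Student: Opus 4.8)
The plan is to exploit the extreme rigidity of the hypothesis. If $P \in S(\beta)$ has degree $n$ and its derivative has a single distinct zero $\zeta$, then $P'(z) = na(z-\zeta)^{n-1}$ for some $a \ne 0$, so $P(z) = a(z-\zeta)^{n} + b$, and $P(\beta)=0$ forces $b = -a(\beta-\zeta)^{n}$; hence $P(z) = a\bigl[(z-\zeta)^{n} - (\beta-\zeta)^{n}\bigr]$. Thus the roots of $P$ are the vertices $z_{k} = \zeta + (\beta-\zeta)e^{2\pi i k/n}$ ($k=0,\dots,n-1$, with $z_{0}=\beta$) of a regular $n$-gon centered at $\zeta$ with circumradius $\rho := |\beta-\zeta|$, and since $\zeta$ is the only critical point we have $d(P,\beta) = \rho$ exactly (if $\rho = 0$ the theorem is trivial, so assume $\rho > 0$). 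From $z_{k}-\beta = (\beta-\zeta)(e^{2\pi i k/n}-1)$ and $e^{2\pi i k/n}-1 = 2\sin(\pi k/n)\,e^{i(\pi k/n + \pi/2)}$ one reads off that $|z_{k}-\beta| = 2\rho\sin(\pi k/n)$ for $1\le k\le n-1$ and that $\arg(z_{k}-\beta)$ advances by exactly $\pi/n$ when $k$ increases by $1$. The task is reduced to: given that all $z_{k}$ lie in the closed unit disk, show $\rho \le 1-(1/3)\beta(1-\beta)$.

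I would use a single elementary fact about the disk: \emph{if $w$ lies in the closed unit disk and the ray from $\beta$ through $w$ makes an angle $\phi\in[0,\pi]$ with the ray from $\beta$ through $-1$, then $|w-\beta| \le \sqrt{1-\beta^{2}\sin^{2}\phi}+\beta\cos\phi$.} This follows by writing $w = \beta + te^{i\nu}$ with $t=|w-\beta|$, observing that $|w|^{2}\le 1$ forces $t \le \sqrt{1-\beta^{2}\sin^{2}\nu} - \beta\cos\nu$, and checking via a short derivative computation (using $\beta\le 1$) that this upper bound is a decreasing function of the angular distance from $\nu$ to the direction of $-1$; the case $\phi=0$ recovers the familiar $|w-\beta|\le 1+\beta$. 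Now I apply this to the ``most opposite'' roots. If $n$ is even, take $w = z_{n/2}$, which is at distance $2\rho$ from $\beta$; this gives $2\rho \le 1+\beta$. If $n$ is odd, take $w$ to be whichever of $z_{(n-1)/2}, z_{(n+1)/2}$ points farther from $-1$: these two points are each at distance $2\rho\cos(\pi/(2n))$ from $\beta$ and their directions from $\beta$ differ by exactly $\pi/n$, so at least one of them makes an angle $\ge \pi/(2n)$ with the ray from $\beta$ through $-1$ (two directions lying in an open arc of width $\pi/n$ cannot be $\pi/n$ apart). This yields
\[
   \rho \;\le\; \frac{\beta}{2}
      + \frac{\sqrt{1-\beta^{2}\sin^{2}(\pi/(2n))}}{2\cos(\pi/(2n))}.
\]

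It remains to compare with $1-(1/3)\beta(1-\beta)$. For even $n$ the bound $\rho\le(1+\beta)/2$ suffices, since $(1+\beta)/2 \le 1-(1/3)\beta(1-\beta)$ rearranges to $(1-\beta)(3-2\beta)\ge 0$, true on $[0,1]$. For odd $n$, set $s=\sin^{2}(\pi/(2n))$; the quantity $(1-\beta^{2}s)/(1-s)$ has $s$-derivative $(1-\beta^{2})/(1-s)^{2}\ge 0$, so it is nondecreasing in $s$, and hence so is the displayed bound; since $s \le \sin^{2}(\pi/6) = 1/4$ for every odd $n\ge 3$, the bound is at most its value at $n=3$, which simplifies to $\tfrac{\beta}{2}+\tfrac{1}{\sqrt{3}}\sqrt{1-\beta^{2}/4} = \bigl[3\beta+\sqrt{12-3\beta^{2}}\bigr]/6 = r_{3}(\beta)$, and $r_{3}(\beta)\le 1-(1/3)\beta(1-\beta)$ is exactly the statement $c_{3}=1/3$ of Theorem~\ref{theorem_3}. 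Therefore $d(P,\beta)=\rho\le 1-(1/3)\beta(1-\beta)$ in all cases.

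The one delicate step is the odd-degree case. A single far-out root only gives $2\rho\cos(\pi/(2n))\le 1+\beta$, which fails to deliver the claim for $\beta$ near $1$; the point is that one must use a pair of nearly-opposite roots and exploit that they cannot both aim almost exactly at the far point $-1$ of the disk. Everything else — the structural reduction, the even case, and the monotonicity reduction of the odd case to $n=3$ — is routine.
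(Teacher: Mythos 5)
Your proof is correct. It shares the paper's skeleton — the roots form a regular $n$-gon centered at the unique critical point $\zeta$ with circumradius $\rho=|\beta-\zeta|=d(P,\beta)$, the even case follows from the single antipodal root being within $1+\beta$ of $\beta$, and the odd case is worst at $n=3$ — but your treatment of the odd case is genuinely different and, notably, more complete. The paper simply asserts that the maximum of $r$ is ``achieved when the two roots of $P$ furthest from $\beta$ are conjugates of modulus $1$,'' i.e., it assumes without justification that the extremal configuration is the symmetric one, and then solves $|z_i|=1$ exactly. You instead bound an \emph{arbitrary} configuration: your disk lemma $|w-\beta|\le\sqrt{1-\beta^2\sin^2\phi}+\beta\cos\phi$ (with the monotonicity check in $\phi$, which is where $\beta\le1$ enters) plus the pigeonhole observation that the two near-antipodal vertices, whose directions from $\beta$ differ by exactly $\pi/n$, cannot both point within $\pi/(2n)$ of $-1$, yields the same bound without any extremality claim. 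Your monotonicity in $s=\sin^2(\pi/(2n))$ and the reduction to $r_3(\beta)$ via Theorem~\ref{theorem_3} replaces the paper's direct algebraic contradiction from $2(1+\cos(\pi/n))(r^2-\beta r)=1-\beta^2$ with $\cos(\pi/n)\ge1/2$; the two computations are equivalent (note $1+\cos(\pi/n)=2(1-s)$), but yours makes transparent why $n=3$ is the worst odd case. In short: same strategy, but your argument rigorously fills the extremal-configuration gap that the paper's proof glosses over.
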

\begin{proof}
Take any $P \in S(\beta)$ of degree $n$ with exactly one distinct critical 
point at~$\zeta$.  Then all roots of $P$ are on the circle centered at
$\zeta$ with radius $r=|\beta-\zeta|$, so $d(P, \beta)=r$. 

If $n$ is even, then the maximum value of $r$ is achieved when the root
of $P$ furthest from $\beta$ is $-1$, and then $r=(1+\beta)/2$.  Thus  
in general we have
\[
r \le \frac{1+\beta}{2} + \frac{(1-\beta)(3-2\beta)}{6} 
   = 1-(1/3)\beta(1-\beta).
\]

If $n$ is odd, then the maximum value of $r$ is achieved when the
two roots of $P$ furthest from $\beta$ are conjugates of modulus $1$, 
and thus of the form $z_i=\beta-r-re^{\pm i\pi/n}$.  Then

\begin{align*}
1=|z_i|^2 &= [\beta-r-r\cos(\pi/n)]^2+[r \sin(\pi/n)]^2\\
          &= \beta^2-2\beta r(1+\cos(\pi/n))+2r^2(1+\cos(\pi/n))
\end{align*}
which implies $2[1+\cos(\pi/n)](r^2-\beta r) =1-\beta^2$.
Since $n\ge3$, then $\cos(\pi/n)\ge1/2$ and so 
$r^2-\beta r\le (1-\beta^2)/3$.

If $r>(2+\beta)/3$ then
\[
r(r-\beta) > \frac{2+\beta}{3}\cdot\frac{2-2\beta}{3} 
  = \frac{1-\beta^2}{3} + \frac{(1-\beta)^2}{9} \ge \frac{1-\beta^2}{3},
\]
which is a contradiction, so $r\le(2+\beta)/3+(1-\beta)^2/3 
  = 1-(1/3)\beta(1-\beta)$.

\end{proof}

\section{Conjectures}\label{section_4} 

While we have verified Conjecture \ref{conjecture_1} for a number of
special cases, there remain many unanswered questions.  To provide
guidance about potential answers, we present some conjectures
based on the results of extensive experimental searches for polynomials
with minimal $c_n$ values.

\subsection{Polynomials of low degree}\ 

Sendov's conjecture is known to be true for polynomials of degree $n\le8$
\cite{Brown-Xiang}.  In this section, we seek improved bounds for
these polynomials.

Theorem \ref{theorem_3} showed that $c_2=1/2$ and $c_3=1/3$.

For polynomials of degree $4$, $5$, and $8$, experimental results suggest that
$c_n(\beta)$ decreases to its minimum as
$\beta$ approaches $1$.  Assuming this is true,
then using the notation and results of \cite[Theorem 1]{Miller}, we have
$r_{n+1}(\beta)=1+(D_1+D_2/n)(1-\beta)+\bigO{1-\beta}^2$, so then
$c_{n+1}(\beta)=-(D_1+D_2/n)+\bigO{1-\beta}$ for $\beta$ sufficiently close to
$1$, so then $c_{n+1}=-(D_1+D_2/n)$.  In particular, we can calculate
$c_4=-[(-1/4)+(-1/4)/3]=1/3$, $c_5=-[(-1/5)+(-2/5)/4]=3/10$ and
$c_8=-[((\sqrt{2}-2)/2)+((\sqrt{2}-2)/2)/7]=(8-4\sqrt{2})/7$.

For $n=6$ and $7$, experimental results suggest that the minimum $c_n$
are achieved by the polynomials defined by the data in Table \ref{table_1}, 
and from that we can calculate the corresponding values of $r_n$ and $c_n$ 
in Table \ref{table_2}.

\begin{table}[ht]
\caption{Data to define polynomials 
$P(z)=\int_\beta^z \prod_i(z-\zeta_i)^{m_i}\,dz$ }
\label{table_1}
\begin{tabular}{|c|c|p{3.0in}|}
   \hline
   $n$ & $\beta$ & critical points with multiplicities $(\zeta_i, m_i)$ \\
   \hline
$6$ & $0.788188270312241$ & $(0.0469833741737209\pm0.576557593047195\,i, 1)$\newline$(-0.150855581784183, 3)$\\ \hline
$7$ & $0.722412690737455$ & $(0.141636085050414\pm0.729946180810592\,i, 1)$\newline$(-0.210391089590075, 4)$\\ \hline
\end{tabular}
\end{table}

\begin{table}[ht]
\caption{Conjectured smallest $c_n$ for polynomials of degree $n$}
\label{table_2}
\begin{tabular}{|c|c|c|}
   \hline
   $n$ & $r_n$ & $c_n$ \\
   \hline
     $4$ & - & $1/3$\\
     $5$ & - & $3/10$\\
     $6$ & $0.939043852096423$ & $0.365121611819106$ \\
     $7$ & $0.932803780327529$ & $0.335088765359222$ \\
     $8$ & - & $(8-4\sqrt{2})/7\approx 0.334735107215374$ \\
  \hline
\end{tabular}
\end{table}

Based on this experimental data, we make

\begin{conjecture}\label{conjecture_9} 
The conjectured values of $c_n$ listed in Table \ref{table_2} are the correct 
values.
\end{conjecture}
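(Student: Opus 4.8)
A proof of Conjecture~\ref{conjecture_9} would need to establish, for each $n \in \{4,5,6,7,8\}$, that $c_n$ is \emph{at most} the value listed in Table~\ref{table_2} and that it is \emph{at least} that value. The first of these is close to being in hand. For $n \in \{4,5,8\}$, the asymptotic expansion of $r_n(\beta)$ near $\beta = 1$ supplied by \cite[Theorem~1]{Miller} has linear coefficient equal to the negative of the value tabulated in Table~\ref{table_2}, so $c_n(\beta)$ tends to that value as $\beta \to 1$ and hence $c_n$ is at most it. For $n \in \{6,7\}$, one checks that the polynomial $P$ determined by the data in Table~\ref{table_1} actually lies in $S(\beta)$ --- a finite computation verifying that its $n$ roots have modulus at most $1$ --- and that $d(P,\beta)$ equals the listed value of $r_n$, which again forces $c_n$ to be at most the tabulated value. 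The substance of the conjecture is therefore the reverse inequality: $r_n(\beta) \le 1 - c_n\,\beta(1-\beta)$ for \emph{every} $\beta \in (0,1)$ and \emph{every} degree-$n$ polynomial in $S(\beta)$.

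The plan for that direction is to reduce first to extremal polynomials and then to an explicit finite optimization. After normalizing to monic, the degree-$n$ members of $S(\beta)$ are parametrized by their $n-1$ roots other than $\beta$, which range over a compact polydisk; since $d(\cdot,\beta)$ is continuous, for each $\beta$ there is an extremal $P_\beta$ with $d(P_\beta,\beta) = r_n(\beta)$. One would then invoke the structure theory of Sendov-extremal polynomials --- such a $P_\beta$ may be taken with all roots other than $\beta$ on the unit circle, its nearest critical point attains the distance $r_n(\beta)$, and the configuration satisfies first-order (logarithmic-derivative / Lagrange-multiplier) optimality conditions --- to collapse the problem, for these small $n$, to a polynomial system in $\beta$ together with a few real coordinates describing $P_\beta$. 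This is essentially the route by which Sendov's conjecture was verified for $n \le 8$ in \cite{Brown-Xiang}; the new ingredient is that one must track the quadratic-in-$\beta$ refinement, proving not merely $r_n(\beta) \le 1$ but the sharper bound with the explicit constant $c_n$, which is what pins down exactly which configurations can be extremal.

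The argument then proceeds case by case. For $n = 4$ one might hope to reuse Theorem~\ref{theorem_5}, but that theorem is stated only for real polynomials, and the example following Lemma~\ref{lemma_4} shows that its proof does not transfer verbatim; extending Theorem~\ref{theorem_5} to complex polynomials would settle $n = 4$, presumably by combining the estimate $|z - \beta|^2 = |z|^2 - 2\beta\,\Re(z) + \beta^2 \le 1 - 2\beta\,\Re(z) + \beta^2$ with a case split on which side of the line $\Re z = \beta$ the roots of $P$ lie, in the style of Lemma~\ref{lemma_4} and Theorem~\ref{theorem_7}. The case $n = 5$ needs an analogous but heavier analysis. For $n \in \{6,7\}$ the conjectured extremum is a specific configuration --- one real critical point of multiplicity $n - 3$ together with a conjugate pair of simple critical points --- and one must show that it globally beats every competing configuration type (other multiplicity patterns, all critical points real, and so on); this is a constrained optimization over the boundary of a semialgebraic set with no evident convexity. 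For $n = 8$ the claimed minimum is attained only in the limit $\beta \to 1$, so it would suffice to prove that $c_8(\beta)$ is non-increasing on $(0,1)$, or at least never drops below its limiting value --- again an explicit but unpleasant one-variable inequality.

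The main obstacle is this global-optimization step: there is no structure theorem sharp enough to isolate the extremal polynomial from first principles for general $n$, the relevant configuration space is genuinely complex and non-convex, and ruling out all competitors will almost certainly require a certified computation --- interval arithmetic, resultants, or a sum-of-squares / Positivstellensatz certificate --- in the spirit of \cite{Brown-Xiang}, now carried out against the refined bound rather than against $1$. A subsidiary difficulty is that the extremal data in Table~\ref{table_1} are at present known only numerically: to make the $n \in \{6,7\}$ cases rigorous one would first have to identify the minimal polynomials with rational coefficients satisfied by $\beta$, $r_n$, and the critical points, and then certify that the tabulated numbers are the correct roots.
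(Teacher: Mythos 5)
This statement is a conjecture, and the paper offers no proof of it: the values in Table~\ref{table_2} are justified only by numerical searches, together with the heuristic (for $n\in\{4,5,8\}$) that the minimum of $c_n(\beta)$ occurs as $\beta\to1$, in which case Miller's expansion pins down the limiting value. Your submission is correspondingly not a proof but a research program, and you say so; judged as a proof it has a genuine gap, namely the entire lower-bound direction $r_n(\beta)\le 1-c_n\beta(1-\beta)$ for all $\beta$, which is precisely the open content of the conjecture. That said, the parts you do argue are sound and match (indeed slightly sharpen) the paper's reasoning: since $c_n=\min_{0<\beta<1}c_n(\beta)$, the limit $\lim_{\beta\to1}c_n(\beta)=-(D_1+D_2/(n-1))$ from \cite[Theorem~1]{Miller} gives $c_n\le$ the tabulated value unconditionally for $n\in\{4,5,8\}$ (the paper instead phrases this as an assumption that the minimum occurs at $\beta\to1$), and a certified computation that the Table~\ref{table_1} configurations lie in $S(\beta)$ with $d(P,\beta)$ at least the listed $r_n$ would do the same for $n\in\{6,7\}$. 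Your identification of the obstacles --- the absence of a structure theorem sharp enough to isolate extremals, the need for a certified global optimization in the spirit of \cite{Brown-Xiang} but against the refined bound, and the fact that the Table~\ref{table_1} data are only numerical --- is accurate. One caution: the ``structure theory'' you invoke (extremal $P_\beta$ with all non-$\beta$ roots on the unit circle, etc.) is itself only partially established in the literature and would need to be proved in the form you use it, so that step is a second gap rather than a citation.
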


Sendov's conjecture is that $c_n\ge0$, so the values in Table \ref{table_2}
substantially improve the bounds on $d(P, \beta)$ provided 
by Sendov's conjecture.  Note that our Theorem \ref{theorem_5} has verified
the value of $c_4$ in Table \ref{table_2} for real polynomials.

\subsection{Polynomials with at most 2 distinct critical points}\ 

Theorem \ref{theorem_8} settles the case when a polynomial has exactly
one distinct critical point.

In this section, we will restrict ourselves to polynomials of degree $n$ 
in $S(\beta)$ with at most $2$ distinct critical points.  

When $n$ is not a multiple of $3$, experimental results suggest that
$c_n(\beta)$ decreases to its minimum as $\beta$ approaches $1$.  
If this is true, then Theorem \ref{theorem_6} shows that for these
polynomials, $d(P, \beta)\le 1-(3/10)\beta(1-\beta)$.

When $n$ is a multiple of $3$, experimental results suggest that the
lowest bound for $c_n$ is approached when $n$ is large.
For specified large values of $n$, the data defining polynomials of 
degree $n$ with the smallest experimentally achieved values of $c_n$
 are listed in Table \ref{table_3}.

\begin{table}[ht]
\caption{Data to define polynomials 
$P(z)=\int_\beta^z \prod_i(z-\zeta_i)^{m_i}\,dz$ }
\label{table_3}
\begin{tabular}{|r|c|p{3.0in}|}
   \hline
   $n$ & $\beta$ & critical points with multiplicities $(\zeta_i, m_i)$ \\
   \hline

$300$ & $0.999999387871706$ & $(0.0000399954026754105
\newline{}\hfill+0.00898912895921542\,i, 1)$
\newline$(-4.07186589509001E{-}7
\newline{}\hfill-0.0000301636393125034\,i, 298)$\\ \hline

$600$ & $0.999999923978711$ & $(0.0000100349982627836
\newline{}\hfill+0.00449123594189648\,i, 1)$
\newline$(-5.06247934490226E{-}8
\newline{}\hfill-7.51035212032506E{-}6\,i, 598)$\\ \hline

$1200$ & $0.999999990527972$ & $(2.51325659135395E{-}6
\newline{}\hfill+0.00224480126064511\,i, 1)$
\newline$(-6.31118508088983E{-}9
\newline{}\hfill-1.87378597659023E{-}6\,i, 1198)$\\ \hline

$2400$ & $0.999999998817902$ & $(6.28876830399965E{-}7
\newline{}\hfill+0.00112219836751760\,i, 1)$
\newline$(-7.87846617611952E{-}10
\newline{}\hfill-4.67972335535035E{-}7\,i, 2398)$\\ \hline

\end{tabular}
\end{table}

From the polynomials in Table \ref{table_3}, calculating $r_n$ 
and $c_n$ results in Table \ref{table_4}.  From this table, 
we predict that $c_n\ge1/3$ for all $n$ and that
$c_n=(1/3)+O(1/n)$ and so when $n$ is a multiple of $3$ we have
$d(P, \beta)\le 1-(1/3)\beta(1-\beta)$.

Putting these two cases together, we can make
\begin{conjecture}\label{conjecture_10}\
If $P \in S(\beta)$ has at most $2$ distinct critical points, then
$d(P, \beta)\le 1-(3/10)\beta(1-\beta)$.
\end{conjecture}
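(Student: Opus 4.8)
The plan is to exploit the rigidity forced by having at most two distinct critical points and to reduce the statement to a low-dimensional optimization. By scaling we may take $P$ monic of degree $n$, so that $P'(z)=n(z-\zeta_1)^{a}(z-\zeta_2)^{b}$ with $a+b=n-1$ and (after relabeling) $0\le a\le b$, and $P(z)=n\int_\beta^z(w-\zeta_1)^{a}(w-\zeta_2)^{b}\,dw$, the constant of integration being forced by $P(\beta)=0$. Here $d(P,\beta)=\min\bigl(|\beta-\zeta_1|,|\beta-\zeta_2|\bigr)$ and the hypothesis is that all $n$ roots of $P$ lie in the closed unit disk. The case $a=0$, of a single distinct critical point, is exactly Theorem~\ref{theorem_8}, which already yields the stronger bound $1-(1/3)\beta(1-\beta)$, so I may assume $a\ge1$, hence $n\ge3$.

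A first reduction is to the range where $\beta$ is close to $1$. Writing $P(z)=(z-\beta)Q(z)$ and comparing leading coefficients in $P'(\beta)=Q(\beta)$ gives the identity $n\,|\beta-\zeta_1|^{a}|\beta-\zeta_2|^{b}=\prod_{i=1}^{n-1}|\beta-z_i|$ for the roots $z_i$ of $Q$; since each $|\beta-\zeta_j|\ge d(P,\beta)$ this bounds $d(P,\beta)$ in terms of $\prod_i|\beta-z_i|$, and I would sharpen the crude estimate $|\beta-z_i|\le 1+\beta$ by using that every root $z_i\ne\beta$ of $P$ satisfies $\int_\beta^{z_i}(w-\zeta_1)^{a}(w-\zeta_2)^{b}\,dw=0$, which confines $z_i$ to an arc determined by $\beta,\zeta_1,\zeta_2$. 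The expectation is that this produces a bound beating $1-(3/10)\beta(1-\beta)$ as soon as $1-\beta$ exceeds an explicit function of $n$, leaving only a shrinking window near $\beta=1$.

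In that window I would split according to $n\bmod 3$. When $3\nmid n$: the class under consideration is contained in $S(\beta)$, so its supremal $d(P,\beta)$ is at most $r_n(\beta)$, and the asymptotic expansion of $r_n$ quoted in the proof of Theorem~\ref{theorem_6} gives $r_n(\beta)\le 1-(3/10)(1-\beta)+\bigO{1-\beta}^2\le 1-(3/10)\beta(1-\beta)$ for $\beta$ sufficiently close to $1$; together with the first reduction this covers all $\beta$. When $3\mid n$: here one needs the sharper target $d(P,\beta)\le 1-(1/3)\beta(1-\beta)$, and the conjectured extremal polynomials (Table~\ref{table_3}) have one simple critical point together with a critical point of multiplicity $n-2$. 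I would obtain the bound by redoing the circle-geometry computation of Theorem~\ref{theorem_8} for a point of multiplicity $n-2$ and then treating the extra simple critical point as a perturbation, showing --- uniformly in $n$ --- that splitting it off only decreases $d(P,\beta)$, so that the bound for one distinct critical point persists with room to spare.

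The main obstacle is converting the experimental observations (``the worst case occurs as $\beta\to 1$ when $3\nmid n$, and as $n\to\infty$ when $3\mid n$'') into theorems. Concretely, one must rule out an interior minimum of $\beta\mapsto c_n(\beta)$, restricted to this subclass, lying below $3/10$: at any extremal configuration some roots of $P$ lie on the unit circle, and one needs a boundary-variation (Lagrange-multiplier) argument showing that moving $\beta,\zeta_1,\zeta_2$ while keeping those roots on the circle strictly increases $c_n(\beta)$ unless $\beta=1$. Making this rigorous, and making the perturbation estimate in the $3\mid n$ case uniform in the degree, is what stands between the present picture and a complete proof; the remaining steps are routine estimation.
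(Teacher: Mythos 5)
This statement is a \emph{conjecture} in the paper: it is supported only by the numerical data of Tables~\ref{table_3} and~\ref{table_4} together with a heuristic that is itself conditional on unproven experimental observations (``the minimum $c_n$ occurs as $\beta\to1$'' when $3\nmid n$). There is no proof in the paper for your attempt to match, and your proposal is not a proof either --- it is a program, and you candidly say so in the last paragraph. Two of its steps have concrete problems beyond the admitted ones. First, the reduction to a neighborhood of $\beta=1$ cannot work as described without uniformity in $n$: Theorem~\ref{theorem_6} only gives $r_n(\beta)\le 1-(3/10)\beta(1-\beta)$ on a window whose size depends on $n$ (and shrinks as $n$ grows), while your product identity $n\prod_j|\beta-\zeta_j|^{m_j}=\prod_i|\beta-z_i|$ with the crude bound $|\beta-z_i|\le 1+\beta$ yields only $d(P,\beta)\le((1+\beta)^{n-1}/n)^{1/(n-1)}\to 1+\beta$, which is vacuous for large $n$; the ``arc confinement'' that is supposed to rescue this is never specified, so the two regimes are not shown to overlap.

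Second, and more seriously, the key claim in the $3\mid n$ case --- that splitting a simple critical point off a critical point of multiplicity $n-1$ ``only decreases $d(P,\beta)$,'' so the one-critical-point bound ``persists with room to spare'' --- is contradicted by the paper's own data. For $n=300$ (even, a multiple of $3$) and $\beta$ as in Table~\ref{table_3}, a polynomial with one distinct critical point satisfies $d(P,\beta)\le(1+\beta)/2\approx 0.999999694$ by the argument of Theorem~\ref{theorem_8}, whereas the two-critical-point configuration of Table~\ref{table_3} attains $d(P,\beta)\approx 0.999999796$, essentially at the ceiling $1-(1/3)\beta(1-\beta)$. So the perturbation \emph{increases} the attainable $d(P,\beta)$, and Table~\ref{table_4} shows the resulting $c_n$ exceeding $1/3$ only by $\bigO{1/n}$ --- there is no room to spare, and a one-sided monotonicity argument of the kind you propose cannot succeed. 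What remains valid in your sketch is exactly what the paper already records: the case of one distinct critical point (Theorem~\ref{theorem_8}), the conditional appeal to Theorem~\ref{theorem_6} near $\beta=1$, and the identification of the hard open problem, namely ruling out an interior minimum of $c_n(\beta)$ below $3/10$ uniformly in $n$.
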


\begin{table}[ht]
\caption{Calculated values}
\label{table_4}
\begin{tabular}{|r|c|c|l|}
   \hline
   $n$ & $r_n$ & $c_n$ & $n(c_n-\frac{1}{3})$ \\
   \hline

$300$ & $0.999999795513218$ & $0.334058904562927$ & $0.217671$ \\
$600$ & $0.999999974631707$ & $0.333699878082601$ & $0.219927$ \\
$1200$ & $0.999999996840912$ & $0.333517557970814$ & $0.22107$ \\
$2400$ & $0.999999999605858$ & $0.333425685271983$ & $0.221645$ \\

  \hline
\end{tabular}
\end{table}

\subsection{Polynomials with at most 3 distinct critical points}\ 

In this section, we will restrict ourselves to polynomials of degree $n$ 
in $S(\beta)$ with at most $3$ distinct critical points.

Experimental results suggest that the lowest bound for $c_n$ is approached
when $n$ is large. For specified large values of $n$, 
the data defining polynomials of degree~$n$ with the smallest experimentally 
achieved values of $c_n$ are listed in Table \ref{table_5}.

\begin{table}[ht]
\caption{Data to define polynomials 
$P(z)=\int_\beta^z \prod_i(z-\zeta_i)^{m_i}\,dz$ }
\label{table_5}
\begin{tabular}{|r|l|p{3.0in}|}
   \hline
   $n$ & $\beta$ & critical points with multiplicities $(\zeta_i, m_i)$ \\
   \hline

$400$ & $0.996978954167785$ & $(0.185748374494393\pm0.583342877793831\,i, 1)$
\newline$(-0.00221270235763630, 397)$\\ \hline

$800$ & $0.998502579477915$ & $(0.185435674615939\pm0.581482907206863\,i, 1)$
\newline$(-0.00109742210050623, 797)$\\ \hline

$1600$ & $0.999250885199129$ & $(0.186470916321556\pm0.582227523214433\,i, 1)$
\newline$(-0.000549178112052197, 1597)$\\ \hline

$3200$ & $0.999625340827601$ & $(0.186990144893956\pm0.582601186890055\,i, 1)$
\newline$(-0.000274706495037257, 3197)$\\ \hline

\end{tabular}
\end{table}

\begin{table}[ht]
\caption{Calculated values}
\label{table_6}
\begin{tabular}{|r|l|l|l|}
   \hline
   $n$ & $r_n$ & $c_n$ & $n(c_n-\frac{4}{15})$ \\
   \hline

$400$ & $0.999191656525421$ & $0.268381534797037$ & $0.685947$ \\
$800$ & $0.999600001578421$ & $0.267525574656365$ & $0.687126$ \\
$1600$ & $0.999800063311181$ & $0.267097347966552$ & $0.68909$ \\
$3200$ & $0.999900047322638$ & $0.266882935081154$ & $0.692059$ \\

  \hline
\end{tabular}
\end{table}

From the polynomials in Table \ref{table_5}, calculating $r_n$ 
and $c_n$ results in Table \ref{table_6}.  From this table, 
we predict that $c_n\ge4/15$ for all $n$ and that
$c_n=(4/15)+O(1/n)$ and so we can make 

\begin{conjecture}\label{conjecture_11}\
If $P \in S(\beta)$ has at most $3$ distinct critical points, then
$d(P, \beta)\le 1-(4/15)\beta(1-\beta)$.
\end{conjecture}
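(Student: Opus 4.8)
The plan is to mimic the strategy of Theorem~\ref{theorem_8}, but now allowing the extra degree of freedom coming from a third distinct critical point, and to push through the same ``worst-case'' analysis that the experimental data in Tables~\ref{table_5}--\ref{table_6} suggest. Fix $P \in S(\beta)$ of degree $n$ with at most $3$ distinct critical points. If $d(P,\beta)\le (1+\beta)/2$ we are done, since $(1+\beta)/2 \le 1-(4/15)\beta(1-\beta)$ for $\beta\in[0,1]$; so assume $d(P,\beta) > (1+\beta)/2$. The first step is to reduce to the case where $P'$ has exactly the form suggested by Table~\ref{table_5}: a high-multiplicity critical point $\zeta_0$ near $0$ together with a conjugate pair $\zeta_1,\bar\zeta_1$ of simple critical points. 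Concretely, I would write $P'(z) = n(z-\zeta_0)^{n-3}(z-\zeta_1)(z-\zeta_2)$ and show (via a perturbation/compactness argument on the closed, bounded parameter space cut out by ``all roots of $P$ in the unit disk'' and ``$d(P,\beta)$ maximal'') that the extremal configuration has $\zeta_1,\zeta_2$ a conjugate pair and the two roots of $P$ of largest modulus lying on the unit circle. This is the analogue of the reductions implicit in Theorem~\ref{theorem_8}.

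Next I would set up the constraint equations. With $r = d(P,\beta) = $ (distance from $\beta$ to the nearest critical point), integrating $P'$ from $\beta$ gives $P(z)=(z-\beta)\,Q(z)$ where $Q(\beta)=P'(\beta)=n(\beta-\zeta_0)^{n-3}(\beta-\zeta_1)(\beta-\zeta_2)$, and comparing constant terms relates $\prod|z_i-\beta|$ (over roots $z_i$ of $Q$) to $n\prod|\zeta_j-\beta|$, exactly as in the line-of-roots computation in Theorem~\ref{theorem_7}. The key quantitative input is that the $n-3$ coincident critical points at $\zeta_0$ force a near-equal spacing of most of the roots of $P$ on a circle through $\beta$ centered near $\zeta_0$ (the ``one distinct critical point'' picture of Theorem~\ref{theorem_8}), while the conjugate pair $\zeta_1,\bar\zeta_1$ perturbs two of the roots out to the unit circle. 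Writing $\zeta_0 = \beta - r + (\text{small})$ and expanding in powers of $1/n$, the condition that the outermost roots have modulus $1$ should yield, to leading order, an equation of the shape $a(r^2-\beta r) = 1-\beta^2 + O(1/n)$ for an explicit constant $a$; tracking constants carefully, $a$ should come out so that $r^2 - \beta r \le (1-\beta^2)\cdot\frac{3}{4} + O(1/n)$, i.e. the analogue of the bound $r^2-\beta r \le (1-\beta^2)/3$ from the odd case of Theorem~\ref{theorem_8} but with the worse constant that produces $4/15$ in the limit. Then the same elementary trick as in Theorem~\ref{theorem_8} finishes: if $r > 1-\tfrac{1}{3}\beta(1-\beta) - \varepsilon_n$ (or whatever the exact threshold $(2+\beta)/3$-analogue is) one derives a contradiction with the constraint, so $r \le 1-(4/15)\beta(1-\beta)$.

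The main obstacle is the middle step: controlling the $O(1/n)$ error terms uniformly and rigorously, and justifying that the extremal polynomial really does have the two-critical-point-plus-multiplicity structure of Table~\ref{table_5} rather than some genuinely different configuration with three \emph{simple} critical points or with $\zeta_1,\zeta_2$ both real. In Theorem~\ref{theorem_8} the single critical point pinned every root onto one circle, which made the optimization essentially one-dimensional; here the third critical point introduces a genuinely two-parameter family, and the claim (suggested only numerically) that the optimum is attained in the $n\to\infty$ limit with $\zeta_1 \approx 0.187 + 0.583i$ is precisely the kind of assertion that resists a clean proof. A conservative fallback, in the spirit of Theorem~\ref{theorem_6} and the remarks following Conjecture~\ref{conjecture_10}, is to prove the bound only under the additional hypothesis that the minimizing $\beta$ tends to $1$ (where an asymptotic expansion of $r_n(\beta)$ along the lines of \cite{Miller} applies directly), leaving the general statement as the conjecture it is stated to be.
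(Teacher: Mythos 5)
The statement you are addressing is Conjecture~\ref{conjecture_11}; the paper does not prove it. Its only support there is the numerical data of Tables~\ref{table_5} and~\ref{table_6}, from which the author extrapolates the constant $4/15$. So there is no proof in the paper to compare yours against, and the relevant question is whether your sketch closes the gap. It does not.

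The decisive missing step is the one you flag yourself: you never justify that the extremal configuration has the form ``one critical point of multiplicity $n-3$ plus a conjugate pair of simple critical points.'' A polynomial with at most $3$ distinct critical points can distribute the $n-1$ critical multiplicities in many other ways (two large multiplicities, three comparable ones, all three real, and so on), and a compactness argument cannot select among these uniformly in $n$, because the parameter space changes with $n$ and the conjecture requires a bound valid for every $n$ simultaneously; even the claim that the infimum of $c_n$ is approached as $n\to\infty$ is itself only experimental. Moreover, even granting that structure, the heart of the conjecture is the exact constant: your phrase ``tracking constants carefully, $a$ should come out so that $r^2-\beta r\le (1-\beta^2)\cdot\frac{3}{4}+O(1/n)$'' is precisely the computation that would have to be carried out, and nothing in your sketch determines that $\frac{3}{4}$ (or the resulting $4/15$) rather than some other number. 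The analogous constant in Theorem~\ref{theorem_8} came from the explicit identity $2(1+\cos(\pi/n))(r^2-\beta r)=1-\beta^2$, which was available only because a single critical point pinned all roots of $P$ onto one circle; you have no analogue of that identity once a conjugate pair of simple critical points is present. Finally, your proposed fallback (assume the minimizing $\beta$ tends to $1$ and invoke the expansion of \cite{Miller}) only reproves a weaker, $n$-dependent statement already contained in Theorem~\ref{theorem_6}, with the constant $3/10$, and does not touch the uniform claim. In short, your proposal is a reasonable research plan, but it is not a proof, which is consistent with the statement being posed in the paper as a conjecture rather than a theorem.
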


\subsection{Polynomials with at most 4 distinct critical points}\ 

In this section, we will restrict ourselves to polynomials of degree $n$ 
in $S(\beta)$ with at most $4$ distinct critical points.

Experimental results suggest that the lowest bound for $c_n$ is approached
when $n$ is large.  For specified large values of $n$, 
the data defining polynomials of degree~$n$ with the smallest experimentally 
achieved values of $c_n$ are listed in Table  \ref{table_7}.

\begin{table}[ht]
\caption{Data to define polynomials 
$P(z)=\int_\beta^z \prod_i(z-\zeta_i)^{m_i}\,dz$ }
\label{table_7}
\begin{tabular}{|r|l|p{3.0in}|}
   \hline
   $n$ & $\beta$ & critical points with multiplicities $(\zeta_i, m_i)$ \\
   \hline

$401$ & $0.992437606116294$ & $(0.323796567775001\pm0.741084178859921\,i, 1)$
\newline$(-0.00478291485722727
\newline{}\hfill\pm0.0428699295532890\,i, 199)$\\ \hline

$801$ & $0.996200558165452$ & $(0.327047737914208\pm0.741870195976750\,i, 1)$
\newline$(-0.00240066386653551
\newline{}\hfill\pm0.0305365991065751\,i, 399)$\\ \hline

$1601$ & $0.998083040087457$ & $(0.330441008376128\pm0.743851319703156\,i, 1)$
\newline$(-0.00120969122802383
\newline{}\hfill\pm0.0217877367297296\,i, 799)$\\ \hline

$3201$ & $0.999040939079070$ & $(0.331221301238603\pm0.744007489122246\,i, 1)$\newline$(-0.000605196555280278
\newline{}\hfill\pm0.0154212861549267\,i, 1599)$\\ \hline

\end{tabular}
\end{table}

\begin{table}[ht]
\caption{Calculated values}
\label{table_8}
\begin{tabular}{|r|l|l|l|}
   \hline
   $n$ & $r_n$ & $c_n$ & $n(c_n-0.24483)$ \\
   \hline

$401$ & $0.998141572278504$ & $0.247618545888762$ & $1.118207$ \\
$801$ & $0.999068007959806$ & $0.246232640425885$ & $1.123515$ \\
$1601$ & $0.999530223821050$ & $0.245533799424740$ & $1.126783$ \\
$3201$ & $0.999765078683669$ & $0.245184462572637$ & $1.134635$ \\

  \hline
\end{tabular}
\end{table}

From the polynomials in Table \ref{table_7}, calculating $r_n$ 
and $c_n$ results in Table \ref{table_8}.  From this table, 
we predict that there is a $c\approx 0.24483$ so that $c_n\ge c$ 
for all $n$ and that $c_n=c+O(1/n)$ and so we can make 

\begin{conjecture}\label{conjecture_12}\
There is a $c\approx 0.24483$ so that 
if $P \in S(\beta)$ has at most $4$ distinct critical points, then
$d(P, \beta)\le 1-c\beta(1-\beta)$.
\end{conjecture}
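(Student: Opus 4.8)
The plan is to mirror the structure used for Theorems~\ref{theorem_8} and the multiple-of-$3$ case behind Conjecture~\ref{conjecture_10}, but now with two pairs of conjugate critical points instead of one. So suppose $P \in S(\beta)$ has degree $n$ and at most $4$ distinct critical points; write $P'(z) = n\prod_i (z-\zeta_i)^{m_i}$ with $\sum m_i = n-1$ and at most $4$ factors. As in the earlier proofs, I would first dispose of the easy regime: if $d(P,\beta) \le (1+\beta)/2$ we are already below $1 - c\beta(1-\beta)$ for any $c \le 1/2$ (in particular for $c \approx 0.24483$), so we may assume $d(P,\beta) > (1+\beta)/2$. Then the identity $|P'(\beta)| = n\,d(P,\beta)^{\text{?}}\cdots$ is replaced by the estimate $n\,d(P,\beta)^{\,n-1} \le n\prod_i |\zeta_i-\beta|^{m_i} = |P'(\beta)| = \prod_j |z_j - \beta|$, where $z_1,\dots,z_{n-1}$ are the roots of $P(z)/(z-\beta)$, all of modulus $\le 1$. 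The crux is to bound $|P'(\beta)|$ from above, because then $d(P,\beta) \le (|P'(\beta)|/n)^{1/(n-1)}$ and letting $n \to \infty$ controls the limiting constant.

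The main work is therefore an optimization: among configurations with all roots of $P$ in the closed unit disk, at least one root at $\beta$, and $P'$ having at most $4$ distinct zeros, maximize $(|P'(\beta)|/n)^{1/(n-1)}$, or rather its $\limsup$ as $n\to\infty$. Following the heuristic of Table~\ref{table_7}, the extremal configuration should have one simple critical point $\zeta_1$ (and its conjugate $\bar\zeta_1$) ``far out'' near the boundary direction away from $\beta$, together with a high-multiplicity critical point $\zeta_0$ (and its conjugate) near $0$ carrying almost all of the multiplicity, with $\beta \to 1$. I would parametrize: let $\zeta_0 = a + bi$ with multiplicity $\sim n/2$ each (for the conjugate pair), $\zeta_1 = u + vi$ simple, and $\beta = 1 - t/n$ for a scaling parameter $t \ge 0$; then expand $\log|P'(\beta)| = \sum_j \log|z_j - \beta|$ using the root-critical-point correspondence. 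As $n\to\infty$ the sum of the $\log$ over the $z_j$ clustered near $\zeta_0$ becomes an integral, and one extracts a limiting function $\Phi(a,b,u,v,t)$ whose supremum over the admissible region (roots of $P$ forced into the disk — this is the binding constraint, controlled via the intersecting-chords idea as in Theorem~\ref{theorem_7}) equals the constant $c \approx 0.24483$. The proof would consist of: (i) showing every degree-$n$ configuration is, up to $o(1/n)$ error, dominated by one of this form; (ii) computing $\Phi$ explicitly; (iii) carrying out the finite-dimensional maximization, which presumably yields a transcendental equation whose root is the stated $c$; and (iv) verifying that the maximizing $(a,b,u,v,t)$ matches the numerical data in Table~\ref{table_7}.

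The hardest step is (i): justifying that nothing exotic does better than the ``one simple plus one fat conjugate pair'' ansatz. In Theorem~\ref{theorem_8} this was immediate because one critical point forces all roots onto a circle; with four critical points the geometry is far less rigid, and one must rule out, e.g., configurations with two fat pairs, or with the fat pair not near the centroid, or with $\beta$ bounded away from~$1$. A plausible route is a compactness/normalization argument: rescale so the critical points have multiplicities $m_i = \mu_i n + O(1)$ with $\sum \mu_i = 1$, pass to a limiting ``weighted critical measure'' supported on $\le 4$ points, and reduce the maximization of $\Phi$ to a problem over such measures subject to the constraint that the associated limiting root distribution lies in the disk. Even granting the ansatz, step (iii) is delicate because $c \approx 0.24483$ is not a clean closed form, so one expects the answer to be ``$c$ is the unique positive root of [explicit polynomial/transcendental equation]'' rather than a nice constant; I would expect the bulk of the paper's remaining length, were this theorem to be proved, to be spent pinning down that equation and confirming its root agrees with the table to the stated precision.
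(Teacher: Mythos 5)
There is nothing in the paper to compare your proposal against: the statement you are addressing is Conjecture~\ref{conjecture_12}, which the paper does not prove. It is supported only by the numerical data in Tables~\ref{table_7} and~\ref{table_8}, and the constant $0.24483$ is an empirical extrapolation with no known closed form. Your proposal is likewise not a proof but a research program, and the gap is essentially the entire argument. Concretely: the inequality chain you set up, $n\,d(P,\beta)^{n-1}\le|P'(\beta)|=\prod_j|z_j-\beta|$, combined with the only a priori bound $|z_j-\beta|\le 1+\beta$, gives $d(P,\beta)\le(1+\beta)\,n^{-1/(n-1)}\to 1+\beta$, which is vacuous. So the entire content must come from the step you defer --- bounding $|P'(\beta)|$ well below $(1+\beta)^{n-1}$ using the constraint that the roots (not the critical points) lie in the unit disk --- and you give no mechanism for doing this beyond invoking ``the root-critical-point correspondence,'' which for four distinct critical points of high multiplicity is exactly the part nobody knows how to control. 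Note also that Theorem~\ref{theorem_7}'s intersecting-chords trick works only because collinearity forces the critical points to interlace with the roots on a line; there is no analogue here.

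Your step (i) --- that the extremal configuration is asymptotically one simple conjugate pair plus one fat conjugate pair with $\beta=1-t/n$ --- is consistent with the scaling visible in Table~\ref{table_7} (indeed $n(1-\beta)\approx 3.03$ across the rows), but you offer no argument for it, and you correctly identify it as the hardest step. The proposed compactness reduction to a ``weighted critical measure'' on at most $4$ points is plausible in outline, but the constraint set (which limiting critical measures correspond to some sequence of polynomials with all roots in the disk) is itself a nontrivial inverse problem that the proposal does not address. Even granting the ansatz, step (iii) would at best produce a variational characterization of $c$, and you would still need a separate argument that finite $n$ and non-extremal $\beta$ do not do worse --- the paper's own Theorem~\ref{theorem_6} shows that near $\beta=1$ the first-order coefficient can be computed, but nothing in the paper or in your proposal handles $\beta$ bounded away from $1$ for large $n$. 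In short, the proposal is a reasonable description of how one might eventually attack the conjecture, but it proves nothing, and it should not be presented as a proof of the statement.
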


\subsection{General polynomials}

In this section, we will consider all polynomials of degree $n$ 
in $S(\beta)$.

Experimental results suggest that the lowest bound for $c_n$ is approached
when $n$ is large.  For specified large values of $n$, 
the data defining polynomials of degree $n$ with the smallest experimentally 
achieved values of $c_n$ are listed in Table \ref{table_9}.  Note that the 
search space for these polynomials is extremely large, so the values in 
this table may be subject to error.

\begin{table}[ht]
\caption{Data to define polynomials 
$P(z)=\int_\beta^z \prod_i(z-\zeta_i)^{m_i}\,dz$ }
\label{table_9}
\begin{tabular}{|r|l|p{3.0in}|}
   \hline
   $n$ & $\beta$ & critical points with multiplicities $(\zeta_i, m_i)$ \\
   \hline

$50$ & $0.932492785695482$ & $(0.288272070152277\pm0.742813688052441\,i, 1)$
\newline$(-0.00780080468460338\pm0.287472554138991\,i, 5)$
\newline$(-0.0507632875087811, 37)$\\ \hline

$100$ & $0.952685415849498$ & $(0.354063712751498\pm0.786842134763372\,i, 1)$
\newline$(0.0710215926019638\pm0.447367177199525\,i, 3)$
\newline$(-0.0337143539272170
\newline{}\hfill\pm0.0669625465717506\,i, 45)$
\newline$(-0.0359846446150283, 1)$\\ \hline

$200$ & $0.964050955456579$ & $(0.410070382621719\pm0.822404449690337\,i, 1)$
\newline$(0.187251759456666\pm0.616300708293278\,i, 2)$
\newline$(0.0205767627249383\pm0.305122928666932\,i, 6)$
\newline$(-0.0216922035881849\pm0.107489433645153\,i, 20)$
\newline$(-0.0275354270381114, 141)$\\ \hline

$400$ & $0.978173100606026$ & $(0.432580453239963\pm0.831987241184806\,i, 1)$\newline$(0.238399354695915\pm0.665288592429133\,i, 2)$\newline$(0.0507055817517486\pm0.360108466749262\,i, 6)$\newline$(-0.0112482862151233\pm0.104496055691722\,i, 35)$\newline$(-0.0167510705518637, 311)$\\ \hline

  \hline
\end{tabular}
\end{table}

\begin{table}[ht]
\caption{Calculated values}
\label{table_10}
\begin{tabular}{|r|l|l|l|}
   \hline
   $n$ & $r_n$ & $c_n$ & $n(c_n-0.233)$ \\
   \hline

$50$ & $0.983256073204263$ & $0.265987758064115$ & $1.649388$ \\
$100$ & $0.988670060464527$ & $0.251352406649079$ & $1.835241$ \\
$200$ & $0.991586382494691$ & $0.242770226189940$ & $1.954045$ \\
$400$ & $0.994924171157889$ & $0.237738329523279$ & $1.895332$ \\

  \hline
\end{tabular}
\end{table}

From the polynomials in Table \ref{table_9}, calculating $r_n$ 
and $c_n$ results in Table \ref{table_10}.  From this table, 
we predict that there is a $c\approx 0.233$ so that $c_n\ge c$ 
for all $n$ and that $c_n=c+O(1/n)$ and so we can make 

\begin{conjecture}\label{conjecture_13}\
There is a $c\approx 0.233$ so that if $P \in S(\beta)$ then
$d(P, \beta)\le 1-c\beta(1-\beta)$.
\end{conjecture}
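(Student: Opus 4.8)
\medskip
\noindent\textbf{A possible route to a proof of Conjecture~\ref{conjecture_13}.}
The natural line of attack is the variational one that underlies all sharp results of this type. Fix the degree $n$ and the value of $\beta$; since $d(P,\beta)$ depends only on the roots of $P$, we may take $P$ monic, and then the maximum defining $r_n(\beta)$ is attained, by compactness of the set of monic degree-$n$ polynomials with all roots in the closed unit disk and a prescribed root at $\beta$, by some extremal polynomial $P_{n,\beta}$. The first step is then to perturb: moving a root of $P_{n,\beta}$ that lies strictly inside the disk, or moving a critical point other than the one nearest $\beta$, and tracking the first-order effect on $d(P,\beta)$ should force a rigid structure on $P_{n,\beta}$. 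The experimental tables point to a very clean conjectural structure — for the extremal polynomial \emph{every} critical point appears to lie on the circle $\{z:|z-\beta|=d(P,\beta)\}$, and every root other than $\beta$ on the unit circle — and establishing this rigidity is the heart of the matter. Once one knows that the nearest critical point $\zeta$ realizes the maximal value $|\zeta-\beta|=d(P,\beta)$, a balayage-type argument should place the remaining critical points on the same circle about $\beta$.

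Granting that structure, write $P'(z)=n\prod_k(z-\zeta_k)^{m_k}$ with $\sum_k m_k=n-1$ and each $|\zeta_k-\beta|=r:=d(P,\beta)$, and recover $P$ from $P(z)=\int_\beta^z P'(w)\,dw$; the only remaining constraint is that the $n-1$ zeros of $P$ other than $\beta$ all have modulus $1$. The quantity to be minimized is $c_n(\beta)=(1-r)/(\beta(1-\beta))$, and the tables indicate that $\inf_{n,\beta}c_n(\beta)$ is approached as $n\to\infty$ with $\beta\to1$. I would therefore introduce the normalized critical-point measure $\mu_n=\frac1{n-1}\sum_k m_k\,\delta_{\zeta_k}$, supported on the circle of radius $r$ about $\beta$, and pass to the limit: $\mu_n$ converges along a subsequence to a probability measure $\mu$ on a limiting circle, the modulus-$1$ constraint on the zeros of $P$ becomes an integral equation on the antiderivative potential of $\mu$, and $\liminf_n\inf_\beta c_n(\beta)$ equals the optimal value of the resulting constrained variational problem over the triples $(\mu,\beta,r)$. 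Solving that problem in closed form would produce the exact constant $c$, the numeric $0.233$ being its decimal approximation.

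To turn the limiting computation into a bound valid for every $n$ and every $\beta$, I would establish a monotonicity/comparison statement: the extremal $c_n$ decreases to $c$ with $c_n=c+O(1/n)$, as Table~\ref{table_10} suggests. The cleanest route is to show $c_{2n}<c_n+O(1/n)$ by manufacturing a competitive degree-$2n$ polynomial from the degree-$n$ extremizer — for instance by splitting its high-multiplicity critical point — which would give $\inf_n c_n=\lim_n c_n=c$ and hence $r(\beta)=\sup_n r_n(\beta)\le 1-c\,\beta(1-\beta)$ for all $\beta\in[0,1]$.

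The principal obstacle is the structural reduction in the first paragraph: classifying the extremal polynomials for $r_n(\beta)$ is at least as hard as Sendov's conjecture itself, since even the qualitative claim that the non-$\beta$ roots of an extremizer lie on the unit circle is open in general, and here one needs the considerably stronger assertion that all critical points lie on a common circle centered at $\beta$ — precise enough to extract the coefficient of $\beta(1-\beta)$ rather than a mere inequality. A secondary difficulty is analytic: as the degree grows the number of distinct critical points of the conjectured extremizers also appears to grow (Table~\ref{table_9}), so the limiting variational problem lives on an infinite-dimensional space of measures, and recovering the sharp constant rather than a slightly weaker one will require an $O(1/n)$ rate of convergence that must be proved by quantitative estimates, not by compactness alone.
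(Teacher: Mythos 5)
There is no proof to compare against here: Conjecture~\ref{conjecture_13} is stated in the paper purely as a conjecture, supported only by the numerical extremizers of Table~\ref{table_9} and the extrapolation $c_n = c + O(1/n)$ read off from Table~\ref{table_10}; the constant $0.233$ is an empirical extrapolation, not a derived quantity. Your submission is likewise not a proof but a research program, and you are candid about that. The gaps you name are real and are the whole of the difficulty. The structural reduction in your first paragraph --- that an extremizer for $r_n(\beta)$ has all non-$\beta$ roots on the unit circle and all critical points on the circle $|z-\beta| = d(P,\beta)$ --- is unproven, and proving it would already be a major advance: even the case $c=0$ (Sendov's conjecture itself) is open for $n \ge 9$, and the standard first-order perturbation arguments only show that roots of an extremizer cannot be moved so as to increase the distance to the nearest critical point, which pins roots to the boundary circle but does not, by any known argument, force the critical points onto a common circle about $\beta$. (To your credit, that structure is at least consistent with Table~\ref{table_9}: the listed critical points there do all sit at essentially the common distance $r_n$ from $\beta$.)

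Beyond that, two further steps of your program are asserted rather than supplied. First, the limiting variational problem over measures $\mu$ on a circle, with the ``modulus-$1$ constraint on the zeros'' turned into an integral equation on the antiderivative potential, is never actually written down, let alone solved; nothing in your sketch produces the number $0.233$ or an exact expression whose decimal expansion begins that way. Second, the passage from a limiting value to a bound valid for every $n$ requires exactly the monotonicity $c_n \downarrow c$ that you propose to obtain by ``splitting a high-multiplicity critical point,'' but the direction you need is a \emph{lower} bound $c_n \ge c$ for all $n$, and a doubling construction $c_{2n} < c_n + O(1/n)$ only shows the infimum is approached, not that it is never undercut at finite $n$ or at some $\beta$ away from $1$. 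So the proposal identifies a plausible architecture for an eventual proof but establishes none of its three load-bearing components; the conjecture remains exactly as open as the paper leaves it.
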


\section{Implications}\label{section_5} 

Phelps and Rodriguez have conjectured \cite[after
Theorem~5]{Phelps-Rodriguez} that the only monic polynomials $P \in
S(\beta)$ with $d(P, \beta) \ge 1$ are of the form $z^n-e^{it}$ for
some~$t$.  Now Bojanov, Rahman and Szynal have shown \cite[Lemma
4]{Bojanov-Rahman-Szynal} that $d(P,0)<1$ for every $P \in S(0)$, so
if $P \in S(\beta)$ with $d(P, \beta) \ge 1$, then our
Conjecture~\ref{conjecture_1} would imply that $\beta=1$.  Given this,
Rubinstein has shown \cite[Theorem 1]{Rubinstein} that
$P(z)=c(z^n-e^{it})$.  Thus our Conjecture~\ref{conjecture_1} implies the
conjecture of Phelps and Rodriguez.



\begin{thebibliography}{99}

\bibitem{Bojanov-Rahman-Szynal}
B. D. Bojanov, Q. I. Rahman \and J. Szynal,
`On a conjecture about the critical points of a polynomial',
{\em Delay Equations, Approximation and Application, Mannheim} 1984,
Internat.\ Series Numer.\ Math. 74 (ed. G. Meinardus and G. N\"urnberger,
Birkh\"auser, Basel, 1985) 83--93.

\bibitem{Brown-Xiang}
J. Brown \and G. Xiang,
`Proof of the Sendov conjecture for polynomials of degree at most eight',
{\em J.\ Math.\ Anal.\ and Appl.\ }232 (1999) 272--292.

\bibitem{Miller}
M. J. Miller,
`A quadratic approximation to the Sendov radius near the unit
circle',
{\em Trans.\ Amer.\ Math.\ Soc.\ }357 (2005) 851--873.

\bibitem{Phelps-Rodriguez}
D. Phelps \and R. S. Rodriguez,
`Some properties of extremal polynomials for the Ilieff
conjecture',
{\em K\=odai Math.\ Sem.\ Rep.\ }24 (1972) 172--175.

\bibitem{Rahman}
Q. I. Rahman,
`On the zeros of a polynomial and its derivative',
{\em Pacific J.\ Math.\ }41 (1972) 525--528.

\bibitem{Rahman-Schmeisser}
Q. I. Rahman \and G. Schmeisser,
{\em Analytic theory of polynomials},
London Mathematical Society Monographs (New Series) 26 
(Oxford University Press, Oxford, 2002).

\bibitem{Rubinstein}
Z. Rubinstein,
`On a problem of Ilyeff',
{\em Pacific J.\ Math.\ }26 (1968) 159--161.

\bibitem{Schmeisser}
G. Schmeisser,
`The conjectures of Sendov and Smale',
{\em Approximation Theory} (ed. B.~D.~Bojanov, DARBA, Sofia, 2002) 
353--369.

\bibitem{Sendov}
Bl. Sendov,
`Hausdorff geometry of polynomials',
{\em East J.\ Approx.\ }7 (2001) 123--178.

\bibitem{Sheil-Small}
T. Sheil-Small,
{\em Complex polynomials},
Cambridge Studies in Advanced Mathematics 75
(Cambridge University Press, Cambridge, 2002).

\end{thebibliography}
\end{document}